\definecolor{todo}{rgb}{1,0,0}
\definecolor{conditional}{rgb}{0,1,0}
\definecolor{e-mail}{rgb}{0,.40,.80}
\definecolor{reference}{rgb}{.20,.60,.22}
\definecolor{mrnumber}{rgb}{.80,.40,0}
\definecolor{citation}{rgb}{0,.40,.80}
\newcommand{\ie}{{\em i.e.}\ }
\newcommand{\cf}{{\em cf.}\ }
\newcommand{\ko}{\: , \;}
\renewcommand{\bf}[1]{\mathbf{#1}}
\numberwithin{equation}{subsection}
\newtheorem{theorem}[subsection]{Theorem}
\newtheorem{classification-theorem}[subsection]{Classification Theorem}
\newtheorem{decomposition-theorem}[subsection]{Decomposition Theorem}
\newtheorem{proposition-definition}[subsection]{Proposition-Definition}
\newtheorem{periodicity-conjecture}[subsection]{Periodicity Conjecture}
\newtheorem{lemma}[subsection]{Lemma}
\newtheorem{proposition}[subsection]{Proposition}
\newtheorem{corollary}[subsection]{Corollary}
\newtheorem{remark}[subsection]{Remark}
\newcommand{\reminder}[1]{}
\newcommand{\opname}[1]{\operatorname{\mathsf{#1}}}
\newcommand{\T}{\opname{T}}
\newcommand{\Hasse}{\opname{Hasse}}
\newcommand{\tors}{\opname{tors}}
\newcommand{\ftors}{\opname{ftors}}
\newcommand{\tw}{\opname{tw}\nolimits}
\renewcommand{\mod}{\opname{mod}\nolimits}
\newcommand{\brick}{\opname{brick}\nolimits}
\newcommand{\cjirr}{\opname{cjirr}\nolimits}
\newcommand{\add}{\opname{add}\nolimits}
\newcommand{\im}{\opname{im}\nolimits}
\newcommand{\A}{\mathbb{A}}
\newcommand{\Z}{\mathbb{Z}}
\newcommand{\N}{\mathbb{N}}
\newcommand{\Q}{\mathbb{Q}}
\newcommand{\E}{\mathbb{E}}
\newcommand{\iso}{\xrightarrow{_\sim}}
\newcommand{\liso}{\xleftarrow{_\sim}}
\newcommand{\Hom}{\opname{Hom}}
\newcommand{\Ext}{\opname{Ext}}
\newcommand{\End}{\opname{End}}
\newcommand{\ca}{{\mathscr A}}
\newcommand{\cd}{{\mathscr D}}
\newcommand{\cF}{{\mathscr F}}
\newcommand{\cs}{{\mathscr S}}
\newcommand{\ct}{{\mathscr T}}
\newcommand{\cu}{{\mathscr U}}
\newcommand{\cv}{{\mathscr V}}
\newcommand{\cw}{{\mathscr W}}
\newcommand{\cx}{{\mathscr X}}
\newcommand{\eps}{\varepsilon}
\renewcommand{\phi}{\varphi}
\renewcommand{\hat}[1]{\widehat{#1}}
\newcommand{\bt}{\bullet}
\newcommand{\sgn}{\mbox{sgn}}
\renewcommand{\tilde}[1]{\widetilde{#1}}
\newcommand{\join}{\vee}
\let\@wraptoccontribs\wraptoccontribs
\begin{document}

\date{June 3, 2018} 

\title{A survey on maximal green sequences}

\author{Bernhard Keller}
\contrib[with an appendix by]{Laurent Demonet}

\address{B. Keller: Universit\'e Paris Diderot -- Paris 7\\
    Sorbonne Universit\'e\\
    UFR de Math\'ematiques\\
    CNRS\\
   Institut de Math\'ematiques de Jussieu--Paris Rive Gauche, IMJ-PRG \\   
    B\^{a}timent Sophie Germain\\
    75205 Paris Cedex 13\\
    France
}
\email{bernhard.keller@imj-prg.fr}
\urladdr{https://webusers.imj-prg.fr/~bernhard.keller/}

\address{L. Demonet: Graduate School of Mathematics\\ 
Nagoya University\\
Chikusa-ku\\
Nagoya\\
464-8602 Japan}
\email{Laurent.Demonet@normalesup.org}
\urladdr{http://www.math.nagoya-u.ac.jp/~demonet/}

\begin{abstract}
Maximal green sequences appear in the study of Fomin--Zelevinsky's cluster algebras.
They are useful for computing refined Donaldson--Thomas invariants,
constructing twist automorphisms and
proving the existence of theta bases and generic bases.
We survey recent progress on their existence and properties
and give a representation-theoretic
proof of Greg Muller's theorem stating that full subquivers inherit maximal green sequences.
In the appendix, Laurent Demonet describes maximal chains of torsion classes
in terms of bricks generalizing a theorem by Igusa.
\end{abstract}

\keywords{Cluster algebra, maximal green sequence, Donaldson--Thomas
invariant, theta basis, generic basis, twist automorphisms, torsion class}

\subjclass[2010]{13F60 (primary); 14N35, 18E40, 18E30 (secondary)}


\maketitle

\section{Introduction}
A quiver is an oriented graph. Quiver mutation is an elementary operation
on quivers. It is the basic combinatorial ingredient of Fomin--Zelevinsky's
definition of cluster algebras \cite{FominZelevinsky02}. In this definition,
one recursively constructs generators for the cluster algebra by
repeatedly mutating an initial seed $(Q,x)$ consisting of a quiver $Q$ and a set
$x$ of indeterminates associated with the vertices of $Q$. The
construction process is recorded in a graph, the {\em exchange graph},
whose vertices are the seeds obtained from $(Q,x)$ by iterated
mutation and whose edges correspond to mutations. 
By definition, the edges of the exchange graph are unoriented.
However, it was noticed early on \cite{MarshReinekeZelevinsky03} that
there is a natural partial order on seeds whose minimal inclusions
correspond to edges of the exchange graph, which thus becomes
oriented. For example, for a linearly
oriented quiver of type $A_n$, the poset of seeds is the $n$th Tamari lattice
\cite{IngallsThomas09, Thomas12}.

Maximal green sequences were invented in \cite{Keller10c} 
(and became part of \cite{Keller11c}) but are already
implicit in the work of Gaiotto--Moore--Neitzke \cite{GaiottoMooreNeitzke09}
(published in \cite{GaiottoMooreNeitzke13}). 
A maximal green sequence is a (finite) path in the oriented exchange graph from
the unique smallest element to the unique largest element. Not all quivers
have maximal green sequences but they do exist for important classes of
quivers and
their existence has important consequences: it yields explicit formulas
for Kontsevich--Soibelman's refined Donaldson--Thomas
invariant associated \cite{KontsevichSoibelman08}
with the quiver, for the twist automorphism \cite{GeissLeclercSchroeer12}
of the associated cluster algebra and it is a sufficient condition for
the existence of a theta basis \cite{GrossHackingKeelKontsevich18}
and a generic basis \cite{Qin19} in the upper cluster
algebra. 

In section~\ref{s:mutation},
we review the purely combinatorial definitions of mutation and
green mutation leading to the notion of (maximal) green sequence 
(and, more generally, reddening sequence). In section \ref{s:applications},
we describe the applications of maximal green sequences mentioned above.
We then report on results concerning the existence and non existence
of maximal green sequences (section~\ref{s:existence}). In particular,
we state Greg Muller's theorem to the effect that full subquivers
inherit maximal green sequences. In the final section~\ref{s:proof}, we
give a proof of Muller's theorem based on recent results in the
study of torsion classes \cite{DemonetIyamaReadingReitenThomas17}.
In the appendix, Laurent Demonet establishes a bijection between
maximal chains of torsion classes and maximal forward $\Hom$-orthogonal
sequences of bricks over a finite-dimensional algebra generalizing
a theorem proved by Igusa \cite{Igusa17a} for representation-finite Jacobi algebras.

\section*{Acknowledgments}
I thank Laurent Demonet for his explanations of the results
of \cite{DemonetIyamaReadingReitenThomas17}, his help with
section~\ref{s:proof} and for contributing the appendix. I am grateful to Daniel
Labardini--Fragoso for helpful discussions and for reference \cite{Labardini09a}.
An abridged version of this note was presented at the ICRA 18 in Prague
in August 2018. I thank the organisers for a wonderful conference and for considering
this material for the proceedings. I am indebted to Zheng Hua for pointing
out a misquotation in section~\ref{s:comparing} of a previous version of
this note and to Jiarui Fei for alerting me to reference \cite{Fei16}.
I thank Volker Genz for pointing out an incorrect formulation
of Remark~\ref{rk:support} in the published version of this note.

\section{Mutation and green mutation}
\label{s:mutation}

\subsection{Quiver mutation} A {\em quiver}\index{quiver} is an oriented graph, i.e.~a
quadruple $Q=(Q_0, Q_1, s, t)$ formed by a set of vertices $Q_0$, a set
of arrows $Q_1$ and two maps $s$ and $t$ from $Q_1$ to $Q_0$ which send
an arrow $\alpha$ respectively to its source $s(\alpha)$ and its
target $t(\alpha)$. In practice, a quiver is given by a picture
as in the following example
\[ 
\xymatrix{ & 3 \ar[ld]_\lambda & & 5 \ar@(dl,ul)[]^\alpha \ar@<1ex>[rr] \ar[rr] \ar@<-1ex>[rr] & & 6 \\
  1 \ar[rr]_\nu & & 2 \ar@<1ex>[rr]^\beta \ar[ul]_\mu & & 4.
  \ar@<1ex>[ll]^\gamma }
\]
An arrow $\alpha$ whose source and target coincide is a {\em loop}\index{loop}; a
{\em $2$-cycle}\index{$2$-cycle} is a pair of distinct arrows $\beta$ and $\gamma$ such that
$s(\beta)=t(\gamma)$ and $t(\beta)=s(\gamma)$. Similarly, one defines
{\em $n$-cycles} for any positive integer $n$. A vertex $i$ of a quiver
is a {\em source}\index{source of a quiver} (respectively a {\em sink}\index{sink of a quiver}) if there is no arrow
with target $i$ (respectively with source $i$).

By convention, in the sequel, by a quiver, we always mean a finite
quiver without loops nor $2$-cycles whose set of vertices is the
set of integers from $1$ to $n$ for some $n\geq 1$. Up to an
isomorphism fixing the vertices, such a quiver $Q$ is given by
the {\em skew-symmetric matrix $B=B_Q$} whose coefficient $b_{ij}$ is
the difference between the number of arrows from $i$ to $j$ and
the number of arrows from $j$ to $i$ for all $1\leq i,j\leq n$.
Conversely, each skew-symmetric matrix $B$ with integer coefficients
comes from a quiver.

Let $Q$ be a quiver and $k$ a vertex of $Q$. The {\em mutation $\mu_k(Q)$}\index{mutation!of a quiver}
is the quiver obtained from $Q$ as follows:
\begin{itemize}
\item[1)] for each subquiver $\xymatrix{i \ar[r]^\beta & k \ar[r]^\alpha & j}$,
we add a new arrow $[\alpha\beta]: i \to j$;
\item[2)] we reverse all arrows with source or target $k$;
\item[3)] we remove the arrows in a maximal set of pairwise disjoint $2$-cycles.
\end{itemize}
For example, if $k$ is a source or a sink of $Q$, then the
mutation at $k$ simply reverses all the arrows incident with $k$. In general,
if $B$ is the skew-symmetric matrix associated with $Q$ and $B'$ the one
associated with $\mu_k(Q)$, we have
\begin{equation} \label{eq:matrix-mutation}
b'_{ij} = \left\{ \begin{array}{ll}
-b_{ij} & \mbox{if $i=k$ or $j=k$~;} \\
b_{ij}+\sgn(b_{ik})\max(0, b_{ik} b_{kj}) & \mbox{else.}
\end{array} \right.
\end{equation}
This is the {\em matrix mutation rule}\index{mutation!of a matrix} introduced by
Fomin-Zelevinsky in \cite{FominZelevinsky02},
cf. also \cite{FominZelevinsky07}. It applies more generally to 
skew-symmetri\-zable matrices, which correspond to
{\em valued quivers}, cf. section~3.3 of \cite{Keller12a}.

One checks easily that $\mu_k$ is an involution. For example,
the quivers
\begin{equation} \label{quiver1}
\begin{xy} 0;<0.3pt,0pt>:<0pt,-0.3pt>::
(94,0) *+{1} ="0",
(0,156) *+{2} ="1",
(188,156) *+{3} ="2",
"1", {\ar"0"},
"0", {\ar"2"},
"2", {\ar"1"},
\end{xy}
\begin{minipage}{1cm}
\vspace*{1cm}
\begin{center} and \end{center}
\end{minipage}
\begin{xy} 0;<0.3pt,0pt>:<0pt,-0.3pt>::
(92,0) *+{1} ="0",
(0,155) *+{2} ="1",
(188,155) *+{3} ="2",
"0", {\ar"1"},
"2", {\ar"0"},
\end{xy}
\end{equation}
are linked by a mutation at the vertex $1$. Notice that these
quivers are drastically different: The first one is a cycle,
the second one the Hasse diagram of a linearly ordered set.

Two quivers are {\em mutation equivalent}\index{mutation equivalent} if they are linked
by a finite sequence of mutations. For example, it is an easy
exercise to check that any two orientations of a tree are
mutation equivalent. Using the quiver mutation applet \cite{Keller06e}
or the Sage package \cite{MusikerStump11} one can check
that the following three quivers are mutation equivalent
\begin{equation} \label{quiver3}
\begin{xy} 0;<0.6pt,0pt>:<0pt,-0.6pt>::
(79,0) *+{1} ="0",
(52,44) *+{2} ="1",
(105,44) *+{3} ="2",
(26,88) *+{4} ="3",
(79,88) *+{5} ="4",
(131,88) *+{6} ="5",
(0,132) *+{7} ="6",
(52,132) *+{8} ="7",
(105,132) *+{9} ="8",
(157,132) *+{10} ="9",
"1", {\ar"0"},
"0", {\ar"2"},
"2", {\ar"1"},
"3", {\ar"1"},
"1", {\ar"4"},
"4", {\ar"2"},
"2", {\ar"5"},
"4", {\ar"3"},
"6", {\ar"3"},
"3", {\ar"7"},
"5", {\ar"4"},
"7", {\ar"4"},
"4", {\ar"8"},
"8", {\ar"5"},
"5", {\ar"9"},
"7", {\ar"6"},
"8", {\ar"7"},
"9", {\ar"8"},
\end{xy}
\quad\quad
\begin{xy} 0;<0.3pt,0pt>:<0pt,-0.3pt>::
(0,70) *+{1} ="0",
(183,274) *+{2} ="1",
(293,235) *+{3} ="2",
(253,164) *+{4} ="3",
(119,8) *+{5} ="4",
(206,96) *+{6} ="5",
(125,88) *+{7} ="6",
(104,164) *+{8} ="7",
(177,194) *+{9} ="8",
(39,0) *+{10} ="9",
"9", {\ar"0"},
"8", {\ar"1"},
"2", {\ar"3"},
"3", {\ar"5"},
"8", {\ar"3"},
"4", {\ar"6"},
"9", {\ar"4"},
"5", {\ar"6"},
"6", {\ar"7"},
"7", {\ar"8"},
\end{xy}
\quad\quad
\begin{xy} 0;<0.3pt,0pt>:<0pt,-0.3pt>::
(212,217) *+{1} ="0",
(212,116) *+{2} ="1",
(200,36) *+{3} ="2",
(17,0) *+{4} ="3",
(123,11) *+{5} ="4",
(64,66) *+{6} ="5",
(0,116) *+{7} ="6",
(12,196) *+{8} ="7",
(89,221) *+{9} ="8",
(149,166) *+{10} ="9",
"9", {\ar"0"},
"1", {\ar"2"},
"9", {\ar"1"},
"2", {\ar"4"},
"3", {\ar"5"},
"4", {\ar"5"},
"5", {\ar"6"},
"6", {\ar"7"},
"7", {\ar"8"},
"8", {\ar"9"},
\end{xy}
\begin{minipage}{1cm}
\vspace*{1.5cm}
\begin{center} . \end{center}
\end{minipage}
\end{equation}
The common {\em mutation class}\index{mutation class} of these quivers contains 5739 quivers
(up to isomorphism). The mutation class of `most' quivers is infinite.
The classification of the quivers having a finite mutation class
was achieved by Felikson-Shapiro-Tumarkin
\cite{FeliksonShapiroTumarkin12, FeliksonShapiroTumarkin12a}:
in addition to the quivers associated
with triangulations of surfaces (with boundary and marked points,
cf. \cite{FominShapiroThurston08})
the list contains $11$ exceptional quivers, the largest of which is
in the mutation class of the quivers~(\ref{quiver3}).

\subsection{Green quiver mutation}

Let $Q$ be a quiver. The {\em framed quiver $\hat{Q}$} is
obtained from $Q$ by adding, for each vertex $i$, a new vertex
$i'$ and a new arrow $i \to i'$. For example, if $Q$ is the quiver
$1 \to 2$, then the framed quiver $\hat{Q}$ is
\[
\xymatrix{ 1 \ar[r] \ar[d] & 2 \ar[d] \\ \textcolor{blue}{1'} & \textcolor{blue}{2'}}
\quad\raisebox{-0.5cm}{.}
\]
The new vertices $i'$ are called {\em frozen} vertices, because we never
mutate at them. 
Now suppose that we have transformed $\hat{Q}$ into a quiver $R$ 
by a finite sequence of mutations (at non frozen vertices).
A non frozen vertex $i$ is {\em green} in $R$ if
it is not the target of any arrows ${j\,'} \to i$ from 
frozen vertices $j'$ in $R$. 
It is {\em red} if it is not the source of any arrows $i \to {j\,'}$ to
frozen vertices of $R$, cf. Figure~\ref{fig1}.

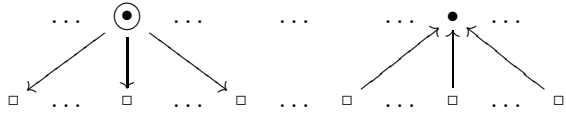
\begin{figure}
\[
\xymatrix@C=0.2cm@R=0.7cm{ & \ldots & \mbox{\textcircled{$\bt$}}
\ar[lld] \ar[d] \ar[rrd] & \ldots 
&  & \ldots & &  \ldots & \bt  & \ldots & \\
\square & \ldots & \square & \ldots & 
\square &  \ldots & \square \ar[urr]  & \ldots & \square \ar[u]  & \ldots & 
\square \ar[ull] 
}
\]
\caption{Green \textcircled{$\bt$} and red $\bt$ vertices in $R$} 
\label{fig1}
\end{figure}

\begin{theorem}[Derksen-Weyman-Zelevinsky \cite{DerksenWeymanZelevinsky10}]
Each non frozen vertex of  $R$ is either green or red.
\end{theorem}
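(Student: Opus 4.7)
The plan is to reformulate the theorem as the \emph{sign coherence} of $c$-vectors. For each non-frozen vertex $i$ of $R$, define the $c$-vector $c_i \in \Z^n$ by letting its $j$-th component count the arrows $j' \to i$ in $R$ minus the arrows $i \to j'$. Then $i$ is green iff $c_i$ has all components $\leq 0$ and red iff $c_i$ has all components $\geq 0$, so the theorem amounts to the assertion that every $c$-vector occurring in any quiver $R$ reachable from $\hat{Q}$ by mutations at non-frozen vertices is sign-coherent.

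I would proceed by induction on the length of the mutation sequence from $\hat{Q}$ to $R$. The base case is immediate, since in $\hat{Q}$ itself one has $c_i = -e_i$. For the inductive step, suppose every $c$-vector in $R$ is sign-coherent and mutate at a non-frozen vertex $k$. Applying the matrix mutation rule~\eqref{eq:matrix-mutation} to the frozen rows gives $c_k' = -c_k$ and, for $i \neq k$,
\[
(c_i')_{j'} \;=\; (c_i)_{j'} + \sgn((c_k)_{j'})\,\max(0,\,(c_k)_{j'}\, b_{k,i}).
\]
Because $c_k$ is sign-coherent by the inductive hypothesis, $\sgn((c_k)_{j'})$ takes a common value $\eps \in \{+1,-1\}$ on all nonzero entries, and the formula collapses to the uniform linear rule $c_i' = c_i + \eps\,\max(0, \eps\, b_{k,i})\, c_k$, independent of the index $j'$.

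The hard part is that this rewriting does \emph{not} on its own force $c_i'$ to be sign-coherent: adding a multiple of $c_k$ to $c_i$ can a priori produce entries of mixed sign, even when $c_i$ and $c_k$ are each sign-coherent. A purely combinatorial induction therefore cannot close, and a categorical input is required. The standard argument, due to Derksen--Weyman--Zelevinsky, attaches to each stage $R$ a decorated representation of the Jacobian algebra $J(Q,W)$ for a generic potential $W$, and shows via the theory of $F$-polynomials that the $c$-vectors of $R$ are realized either as dimension vectors of ordinary module summands (yielding $c_i \geq 0$) or as negatives of coefficient-type summands (yielding $c_i \leq 0$), never as a mixture. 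The principal obstacle is proving the mutation-compatibility of this categorification, i.e.\ that mutation of quivers with potentials is well-defined up to right equivalence and preserves the sign dichotomy; this is the technical heart of the argument and is carried out in their work on mutations of quivers with potentials and on cluster algebras with geometric coefficients.
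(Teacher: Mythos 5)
The paper offers no proof of this theorem --- it only cites Derksen--Weyman--Zelevinsky (and the alternative proofs of Plamondon and Nagao) --- and your sketch follows exactly the cited DWZ route: reformulate the dichotomy as sign-coherence of $c$-vectors, observe that the naive induction via the matrix mutation rule does not close on its own, and invoke the categorification by decorated representations of quivers with potential and $F$-polynomials. This is correct as an outline, modulo two cosmetic points: your collapsed recursion has a spurious factor of $\eps$ (it should read $c_i' = c_i + \max(0,\eps\, b_{ki})\,c_k$), and your sign convention for $c$-vectors is the opposite of the paper's (which makes green correspond to all components $\geq 0$).
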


The proof is based on the theory of mutations of quivers with potential
and their decorated representations developed in \cite{DerksenWeymanZelevinsky08,
DerksenWeymanZelevinsky10}.
Alternative proofs of the theorem were given in \cite{Plamondon11a} (via
the cluster category) and in \cite{Nagao13} (via Donaldson--Thomas theory). 
An important 
generalization to valued quivers is proved \cite{GrossHackingKeelKontsevich18}.
As shown in \cite{NakanishiZelevinsky12}, the theorem is central in the theory
of cluster algebras.

The {\em $c$-vector $\alpha_i \in\Z^n$} associated with a non frozen vertex $i$ 
of $R$ is the integer vector whose $j$th component is the difference
between the number of arrows from $i$ to $j'$ minus the number of arrows
from $j'$ to $i$ in $R$. By the theorem, each $c$-vector has either
all components $\geq 0$ or all components $\leq 0$ (sign-coherence of
$c$-vectors). The following definition was first given in \cite{Keller10c, Keller11c}.
A sequence $\bf{i}=(i_1,\ldots, i_N)$ of vertices is
{\em green} if, for each $1\leq t\leq N$, the vertex $i_t$
is green in the partially mutated quiver
\[
\mu_{i_{t-1}} \ldots \mu_{i_2}\mu_{i_1} (\hat{Q}).
\]
It is {\em maximal green} if moreover all non frozen vertices of 
$\mu_{\bf{i}}(\hat{Q})$ are red. It is {\em reddening} \cite{Keller17}
(or {\em green-to-red} \cite{Muller16})
if it is not necessarily green but all non frozen vertices of $\mu_{\bf{i}}(\hat{Q})$ 
are red.

In Figure~\ref{fig2}, we see that the quiver $\vec{A}_2$ has exactly
two maximal green sequences: $(1,2)$ and $(2,1,2)$ (green vertices
are encircled). The final quivers in the two sequences are
isomorphic by a {\em frozen isomorphism} (i.e. an isomorphism which
fixes the frozen vertices) to the {\em coframed quiver $\widecheck{Q}$}, which is obtained
from $Q$ by adding, for each vertex $i$, a new vertex $i'$ and an
arrow $i'\to i$. This is a general phenomenon:

\begin{figure}
\[
\def\g#1{\save [].[dr]!C *++\frm{}="g#1" \restore}
\xymatrix{
 & &  *+[o][F-]{1}\g1 \ar[r] \ar[d] & *+[o][F-]{2} \ar[d] & & *+[o][F-]{1}\g2  \ar[d] \ar[dr] & 2 \ar[l] & &  \\
 & &   1'                & 2' & &    1'               & 2' \ar[u]\\
\g3 1 & *+[o][F-]{2} \ar[l] \ar[d]  &  &  &   &    &   & \g4 1 \ar[r] & *+[o][F-]{2} \ar[dl] \\
1'\ar[u] & 2'            &  &  &   &    &   &  1' \ar[u] & 2' \ar[ul] \\
 & & \g5 1 \ar[r] & 2         &  &  \g6 1 & 2 \ar[l] \\
 & &    1' \ar[u] & 2' \ar[u] &  &   1' \ar[ur] & 2' \ar[ul]
 \ar@{->}^{\mu_2} "g1"; "g2"
 \ar@<-20pt>@{->}_{\mu_1} "g1"; "g3"
 \ar@<20pt>@{->}^{\mu_1} "g2"; "g4"
 \ar@<-20pt>@{->}_{\mu_2} "g3"; "g5"
 \ar@<20pt>@{->}^{\mu_2} "g4"; "g6"
 \ar@{-}^{\mbox{\small frozen}}_{\mbox{\small isom}} "g5"; "g6"
 \ar_{\mu_{12}} "g1"; "g5"
 \ar^{\mu_{212}} "g1"; "g6"
 }
\]
\caption{The two maximal green sequences for $\vec{A}_2$}
\label{fig2}
\end{figure}

\begin{proposition}[Prop.~2.10 of \cite{BruestleDupontPerotin14}]
\label{prop:permutation}
Suppose that $Q$ admits a reddening sequence $\bf{i}$. Then there is
a unique isomorphism $\mu_{\bf{i}}(\hat{Q}) \iso \widecheck{Q}$ fixing
the frozen vertices and sending a non frozen vertex $i$ to
$\sigma(i)$ for a unique permutation $\sigma$ of the vertices of $Q$.
\end{proposition}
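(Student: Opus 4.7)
The plan is to read off the candidate permutation $\sigma$ directly from the $c$-vectors of the quiver $R:=\mu_{\bf{i}}(\hat{Q})$, and then to check that $\sigma$ together with the identity on frozen vertices extends to a quiver isomorphism onto $\widecheck{Q}$.

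First, I would unpack the reddening hypothesis at the level of $c$-vectors. Recall that, for a non-frozen vertex $i$ of $R$, the $j$-th component of the $c$-vector $\alpha_i \in \Z^n$ counts arrows $i \to j'$ minus arrows $j' \to i$ in $R$. Saying that $i$ is red means precisely that $\alpha_i$ has all components $\leq 0$. A fundamental fact in the theory of cluster algebras with principal coefficients, essentially equivalent to the sign-coherence theorem of Derksen--Weyman--Zelevinsky recalled above, is that the $c$-vectors $\alpha_1,\ldots,\alpha_n$ at any seed reachable from $\hat{Q}$ form a $\Z$-basis of $\Z^n$ (their matrix has determinant $\pm 1$, a property preserved by mutation). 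Combined with non-positivity, this forces $\alpha_i = -e_{\sigma(i)}$ for a unique permutation $\sigma$ of $\{1,\ldots,n\}$; in particular this already yields the uniqueness assertion.

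Second, I would verify that the bijection acting as $\sigma$ on non-frozen vertices and as the identity on frozen ones matches the arrows between frozen and non-frozen vertices. Indeed, $\alpha_i = -e_{\sigma(i)}$ says that in $R$ the only arrow between non-frozen $i$ and a frozen vertex is a single arrow $\sigma(i)' \to i$, which corresponds under the candidate bijection to the unique arrow $\sigma(i)' \to \sigma(i)$ of $\widecheck{Q}$ between non-frozen $\sigma(i)$ and the frozens.

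Third, the main obstacle is to show that the same permutation $\sigma$ also matches the arrows among the non-frozen vertices of $R$ with those of $Q$. Here I would invoke Nakanishi--Zelevinsky's tropical duality, which relates the principal part $B^R$ of the exchange matrix at $R$ to the initial matrix $B_Q$ via the $C$-matrix $C^R$ (whose columns are the $\alpha_i$) by a conjugation-type identity; since $C^R$ is the signed permutation matrix $-P_\sigma$, this identity collapses to ordinary permutation conjugation and shows that $B^R$ is the image of $B_Q$ under $\sigma$, i.e.\ that $\sigma$ is an isomorphism of the full subquiver of $R$ on the non-frozen vertices onto $Q$. An alternative, more categorical route would be to use the decorated representations of Derksen--Weyman--Zelevinsky: a seed with $c$-matrix $-P_\sigma$ is represented by the collection of negative simple decorated representations permuted by $\sigma$, so the underlying quiver with potential is $\sigma$-equivalent to the initial one, yielding the same conclusion.
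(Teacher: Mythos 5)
The paper itself does not prove this proposition --- it simply cites Prop.~2.10 of \cite{BruestleDupontPerotin14} --- so I am judging your argument on its own merits. Your overall plan (read $\sigma$ off the $c$-vectors of $R=\mu_{\bf{i}}(\hat{Q})$, then match the mutable part via tropical duality) is the right one, but your first step has a genuine gap. Unimodularity of the $C$-matrix together with non-positivity of all its entries does \emph{not} force the columns to be $-e_{\sigma(i)}$: for instance the vectors $(-1,0)^T$ and $(-1,-1)^T$ form a $\Z$-basis of $\Z^2$ with all components $\leq 0$, yet they are not negatives of standard basis vectors. So the sentence ``Combined with non-positivity, this forces $\alpha_i=-e_{\sigma(i)}$'' does not follow from the premises you state, and everything downstream --- the matching of the frozen arrows, the uniqueness of $\sigma$, and the conjugation argument --- rests on that conclusion.

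The missing input is the \emph{dual} sign-coherence, i.e.\ precisely the Nakanishi--Zelevinsky duality you invoke only in your third step: the inverse transpose of a $C$-matrix is a $G$-matrix, whose columns are again sign-coherent. Granting that, write $C^R=-M$ with $M\geq 0$ entrywise and $\det M=\pm 1$. If some column $v$ of $M^{-1}$ were non-positive, then $Mv\leq 0$ would contradict $Mv=e_j$; hence every column of $M^{-1}$ is non-negative, and an integer matrix that is non-negative with non-negative inverse and determinant $\pm1$ is a permutation matrix. This yields $C^R=-P_\sigma$ and closes the gap; alternatively one can argue categorically that an all-negative reachable $C$-matrix records the negative simple decorated representations, permuted. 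With this repair, your steps 2 and 3 go through as written.
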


For maximal green sequences of quivers in the mutation class of $A_n$, 
the permutation $\sigma$ is studied in  
\cite{GarverMusiker17, IgusaZhou16}. In general,
it remains mysterious.

It is a fact that large classes of quivers appearing in Lie theory and
higher Teichm\"uller theory do admit maximal green sequences. We refer
to section~\ref{s:existence} below for classes of examples. This is interesting
because of the applications sketched in the next section.

\section{Applications}
\label{s:applications}

\subsection{Refined Donaldson--Thomas invariants}
The \emph{quantum dilogarithm series} is defined by
\begin{align*}
\E(y)  &\ = 1 + \frac{q^{1/2}}{q-1}\cdot y + \cdots  +
\frac{q^{n^{\,2}/2} y^n}{(q^n-1)(q^n-q) \cdots (q^n-q^{n-1})} + \cdots \\
       &\ \in \Q(q^{1/2})[[y]] ,
\end{align*}
where $q^{1/2}$ is an indeterminate whose square is denoted by $q$ and
$y$ is an indeterminate. This series is a classical object with many
remarkable properties, cf. for example \cite{Zagier91}. 

Let $Q$ be a quiver with $n$ vertices.
Let $\lambda_Q:\Z^n \times \Z^n \to \Z$ be 
the bilinear antisymmetric form associated with the matrix $B_Q$.
Define the {\em complete quantum affine space} as the algebra
\[
\hat{\A}_Q = \Q(q^{1/2})\langle\!\langle y^\alpha, \alpha\in\N^n \;|\;
y^\alpha y^\beta = q^{1/2 \,\lambda(\alpha,\beta)} y^{\alpha+\beta} \rangle\!\rangle.
\]
This is a slightly non commutative deformation of an ordinary commutative power
series algebra in the $n$ indeterminates $y_i=y^{e_i}$, where $e_i$ is the
$i$th vector of the standard basis of $\Z^n$. We write $\A_Q$ for the non
completed variant of $\hat{\A}_Q$.
For a sequence $\bf{i}=(i_1, \ldots, i_N)$ of vertices of $Q$,
we define
\[
\E_{Q,\bf{i}} = \E(y^{\eps_1 \beta_1})^{\eps_1} \cdots \E(y^{\eps_N \beta_N})^{\eps_N} ,
\]
where the product is taken in $\hat{\A}_Q$, the vector $\beta_t$ is
the $c$-vector associated with the vertex $i_t$ of the partially
mutated quiver
\[
\mu_{i_{t-1}} \ldots \mu_{i_1}(\hat{Q})
\]
and $\eps_i$ is the common sign of its entries, $1 \leq t \leq N$. Notice that
by the sign-coherence, each factor does belong to $\hat{\A}_Q$.

Let $RDT_Q\in \hat{\A}_Q$ denote the refined Donaldson--Thomas invariant of
$Q$ constructed by Kontse\-vich--Soibelman 
\cite{KontsevichSoibelman08, KontsevichSoibelman10a, KontsevichSoibelman11}.
Notice that their construction has not yet been made completely rigorous due
to technical difficulties arising from the fact that the potentials needed are
{\em infinite} linear combinations of cycles.
The following theorem was the motivation for \cite{Keller10c}. Independently,
it was discovered in the study of the BPS spectrum
by Gaiotto--Moore--Neitzke in \cite{GaiottoMooreNeitzke09}
and used in the physics literature, for example in 
\cite{CecottiEtAl14, CecottiCordovaVafa11}, 
cf. \cite{Xie16} and the references given there.

\begin{theorem} \label{thm:identity} If $Q$ admits a reddening sequence $\bf{i}$, we
have 
\[
RDT_Q= \E_{Q,\bf{i}} \quad\mbox{in}\quad \hat{\A}_Q.
\]
\end{theorem}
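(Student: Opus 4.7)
The strategy is to interpret $\E_{Q,\bf{i}}$ inside the Kontsevich--Soibelman framework and to appeal to the wall-crossing invariance that \emph{defines} $RDT_Q$.

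Fix a non-degenerate potential $W$ on $Q$ and form Ginzburg's complete dg algebra $\Gamma=\Gamma(Q,W)$, together with its finite-dimensional derived category $\cd_{fd}(\Gamma)$ and the canonical heart $\ca_0$, whose simples $S_1,\dots,S_n$ have dimension vectors $e_1,\dots,e_n$. The quantum torus $\hat{\A}_Q$ is a completed twist of the motivic Hall algebra of $\ca_0$, and $RDT_Q$ is by construction the ordered product of factors $\E(y^\gamma)^{\Omega(\gamma)}$ over $\gamma\in\N^n\setminus\{0\}$, taken in decreasing slope order for a generic central charge $Z\colon\Z^n\to\C$, where $\Omega(\gamma)$ is the motivic DT invariant of the $Z$-semistable locus of dimension $\gamma$.

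The next step is to recast a reddening sequence $\bf{i}$ as a sequence of elementary tilts of hearts. Using the categorification of $c$-vectors provided by Plamondon \cite{Plamondon11a} and Nagao \cite{Nagao13}, the partially mutated quiver $\hat{Q}_t=\mu_{i_{t-1}}\cdots\mu_{i_1}(\hat{Q})$ corresponds to a bounded heart $\ca_t\subset\cd_{fd}(\Gamma)$; the $j$th $c$-vector of $\hat{Q}_t$ equals $\eps^{(t)}_j\dimv S^{(t)}_j$, where $S^{(t)}_j$ is the $j$th simple of $\ca_t$ and $\eps^{(t)}_j\in\{\pm 1\}$ records whether $S^{(t)}_j$ lies in $\ca_0$ or in $\ca_0[-1]$ (this is exactly the sign-coherence theorem of Derksen--Weyman--Zelevinsky in categorical language). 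Mutation at $i_t$ is the elementary HRS tilt through the simple $S^{(t)}_{i_t}$, and on the Hall algebra side this tilt contributes precisely the factor $\E(y^{\eps_t\beta_t})^{\eps_t}$. That $\bf{i}$ is reddening translates to $\ca_N=\ca_0[1]$.

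Thus $\E_{Q,\bf{i}}$ is the ordered product of dilogarithmic contributions gathered along a chain of simple tilts from $\ca_0$ to $\ca_0[1]$. The Kontsevich--Soibelman wall-crossing formula says that such an ordered product is an invariant of the pair of endpoints, hence independent of the chain; comparing with the Harder--Narasimhan chain of a generic stability function identifies the common value with $RDT_Q$ in $\hat{\A}_Q$. The main obstacle is that $RDT_Q$ itself is only conditionally defined in \cite{KontsevichSoibelman08}, since the potentials there may be infinite formal sums of cycles and the relevant Behrend-function integrals have not been shown to converge absolutely. One circumvents this by working in Nagao's reformulation \cite{Nagao13}, where both sides of the identity live in the motivic Hall algebra of the cluster category $\cc_{(Q,W)}$ and the wall-crossing invariance needed is established directly by a Hall-algebra factorization argument, bypassing the analytic difficulties.
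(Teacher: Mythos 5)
The survey does not give its own proof of this theorem: it records it as proved in section~7 of \cite{Keller12}, \emph{under the assumption that $RDT_Q$ is well-defined}, and your plan is essentially the strategy of that proof (Ginzburg dg algebra of $(Q,W)$, hearts in $\cd_{fd}(\Gamma)$ reached by iterated simple tilts, identification of $c$-vectors with signed dimension vectors of the simples of the tilted hearts via \cite{Plamondon11a, Nagao13}, and the fact that the ordered dilogarithm product along a chain of tilts depends only on the two endpoint hearts, which for a reddening sequence are $\ca_0$ and $\ca_0[1]$). Two points need attention. First, the final heart is $\ca_0[1]$ only after applying the permutation $\sigma$ of Proposition~\ref{prop:permutation}; this is harmless since a heart is determined by its unordered set of simples, but it should be said. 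Second, and more substantively, your closing claim to ``bypass the analytic difficulties'' by working in Nagao's reformulation does not prove the stated theorem --- it changes it. What the categorical/Hall-algebra argument establishes rigorously is that $\E_{Q,\bf{i}}$ is independent of the choice of reddening sequence $\bf{i}$ (the conjecture of \cite{Nagao11}); the identification of this common value with the Kontsevich--Soibelman invariant $RDT_Q$ of \cite{KontsevichSoibelman08} remains conditional on that invariant actually being well-defined, precisely because the potentials involved are infinite linear combinations of cycles. So your outline is the correct route provided you state the conditionality explicitly (or declare $RDT_Q$ to \emph{be} the endpoint invariant produced by the Hall-algebra construction), which is exactly how the result is presented in the survey and in \cite{Keller12}.
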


The theorem is proved in section~7 of \cite{Keller12} under the assumption
that $RDT_Q$ is well-defined. It implies in particular that the right hand
side is independent of the choice of $\bf{i}$. This was conjectured in
\cite{Nagao11} and can be proved
rigorously using the theory of cluster algebras and their additive
categorification, cf. section~7.11 of \cite{Keller12}. 

Thanks to the theorem, each pair $(\bf{i},\bf{i'})$ of reddening sequences
yields a quantum dilogarithm identity. For example, the 
two maximal green sequences of the quiver $Q=\vec{A}_2$ yield
the {\em pentagon identity}
\begin{equation} \label{eq:pentagon}
\E(y_1) \E(y_2) =\E(y_2) \E(q^{-1/2} y_1 y_2) \E(y_1)
\end{equation}
due to \cite{FaddeevVolkov93} and \cite{FaddeevKashaev94}, 
cf. \cite{Volkov12} for a recent account. Analogous pairs of
maximal green sequences exist for all Dynkin quivers and 
yield generalizations of the pentagon identity
due to Reineke \cite{Reineke10}, cf. \cite{Keller11c}.
These identities further generalize to square products \cite{Keller13} of
Dynkin quivers, cf. \cite{Keller11c}. For pairs $(A_n,A_m)$, an alternative geometric
proof of the identities is given by Allman--Rimanyi \cite{AllmanRimanyi18}.
We refer to section~5 of \cite{Keller17} for more examples.

One may ask whether all the quantum dilogarithm identities obtained from
the theorem are in fact consequences of the pentagon identity. This
is not the case. Counterexamples based on \cite{FominShapiroThurston08}
can be found in \cite{KimYamazaki20}. However, it does hold
if $Q$ is an affine ayclic quiver, as shown 
by Hermes--Igusa \cite{HermesIgusa19}.

In \cite{GoncharovShen18}, Goncharov--Shen construct maximal green sequences
for a large class of quivers appearing in higher Teichm\"uller theory and
apply the theorem to obtain the corresponding Donaldson--Thomas
invariants. Similar results for Grassmannians and double Bruhat cells
are due to Weng \cite{Weng16a, Weng16b, Weng16c} and for
double Bott--Samelson cells to Shen--Weng \cite{ShenWeng19}.

\subsection{Twist automorphisms} Let $Q$ be a quiver admitting a
reddening sequence $\bf{i}$. Let $\sigma$ be the permutation of
Prop.~\ref{prop:permutation}. Then the composition of the mutation
sequence $\mu_{\bf{i}}$ with the permutation $\sigma$ transforms
the initial seed $(Q,x)$ of the cluster algebra $\ca_Q$ into a seed
of the form $(Q,u)$ and thus yields an automorphism
\[
\tw: \ca_Q \iso \ca_Q \ko x_i \mapsto u_i
\]
called the {\em twist automorphism} of $\ca_Q$. 

From the independence of $\E_{Q,\bf{i}}$ of the choice of the reddening
sequence $\bf{i}$, one can deduce that the twist automorphism
$\tw$ is also independent of $\bf{i}$ (\cf section~4 of \cite{Keller11c}
and section~6.4 of \cite{Keller12a}).

As shown by Geiss--Leclerc--Schr\"oer \cite{GeissLeclercSchroeer12},
for unipotent cells of Kac--Moody groups, the
twist automorphism identifies with the chamber ansatz of
Berenstein--Fomin--Zelevinsky \cite{BerensteinFominZelevinsky96}.
It has found important applications in the work of
Marsh--Scott \cite{MarshScott16}, Muller--Speyer \cite{MullerSpeyer17},
Rietsch--Williams \cite{RietschWilliams17},
Cautis--Williams \cite{CautisWilliams18}, \ldots.

\subsection{The Fock--Goncharov conjectures}

Let $Q$ be a valued quiver. The Langlands dual valued quiver $Q^L$ is
obtained by reversing all the valuations of $Q$ (without changing the arrows).
The skew-symmetrizable matrix $B_{Q^L}$ is the opposite transpose 
$-B_Q^T$ of $B_Q$.
Thus, we have $Q=Q^L$ if $Q$ is an ordinary (i.e. equivalued) quiver.
Suppose that $Q$ is obtained from an ice quiver $\tilde{Q}$ by removing
all the frozen vertices and all the arrows incident with them. Assume that
the exchange matrix associated with 
$\tilde{Q}$ is of maximal rank.

\begin{theorem}[Gross--Hacking--Keel--Kontsevich 
\cite{GrossHackingKeelKontsevich18}] If $Q$ has a reddening sequence,
then the Fock--Goncharov duality conjectures  \cite{FockGoncharov09}
hold for $Q$ and in particular the upper cluster algebra
$\cu_{\tilde{Q}}$ admits a basis parametrized by the tropical points
of the cluster Poisson variety associated with $\tilde{Q}^L$.
\end{theorem}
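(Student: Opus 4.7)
The plan is to work inside the cluster scattering diagram $\cd_{\tilde{Q}}$ of Gross--Hacking--Keel--Kontsevich: a locally finite collection of codimension-one walls in $\R^n$, each decorated with a formal wall-crossing automorphism of a quantum torus, assembled consistently so that the composition of wall-crossing automorphisms along any closed loop is trivial. The seeds mutation-equivalent to the initial one correspond to full-dimensional chambers (the ``cluster complex''); the maximal-rank hypothesis on the exchange matrix is what makes this correspondence well defined. For each integral tropical point $q$ of the cluster Poisson variety associated with $\tilde{Q}^L$, one defines the \emph{theta function} $\vartheta_q$ as a generating series over broken lines with endpoint $q$; a priori $\vartheta_q$ only lives in a completion of one cluster chart.

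The first step is to translate the input of a reddening sequence $\bf{i}$ into geometry inside $\cd_{\tilde{Q}}$: it yields a finite piecewise-linear path of chamber-crossings from the all-positive initial chamber to its negative (the chamber corresponding to the coframed quiver $\widecheck{Q}$ under the identification of Proposition~\ref{prop:permutation}). One then proves the Fock--Goncharov conjectures in three pieces:
(i) every cluster monomial appears as a theta function;
(ii) each $\vartheta_q$ is a universal Laurent polynomial, hence an element of $\cu_{\tilde{Q}}$;
(iii) the collection $\{\vartheta_q\}$ is linearly independent and spans.
Steps (i) and (iii) are essentially unconditional in the GHKK framework: (i) follows from the identification of cluster chambers with seed tori and the matching of theta functions with cluster monomials in those chambers, while (iii) uses the leading $g$-vector of $\vartheta_q$ in any fixed cluster chart together with positivity of the structure constants of the theta multiplication, which is built into the scattering diagram construction.

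The main obstacle is (ii), the Laurentness of $\vartheta_q$. Without the reddening hypothesis, broken lines ending in a cluster chamber can cross infinitely many walls, so $\vartheta_q$ is in general only a formal power series; the upper cluster algebra need not see it at all. The role of the reddening sequence is precisely to tame this scattering: the finite path associated to $\bf{i}$ forces a global positivity/finiteness that bounds the set of walls contributing to any given $\vartheta_q$, yielding Laurent polynomiality in each cluster chart. Compatibility of these Laurent expressions under mutation then matches the birational transformations of the cluster $\cx$-variety, placing $\vartheta_q$ in $\cu_{\tilde{Q}}$. Independence of the conclusion from the particular choice of $\bf{i}$ parallels the phenomena behind Theorem~\ref{thm:identity} and the twist automorphism, the scattering diagram furnishing the common framework; to close the proof one verifies that the tropical points indeed parametrise the basis, which is essentially a dimension count against the $g$-vector filtration.
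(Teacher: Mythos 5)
A point of context first: the paper does not prove this statement --- it is quoted, as a survey item, directly from Gross--Hacking--Keel--Kontsevich \cite{GrossHackingKeelKontsevich18}, whose proof occupies a large part of a long paper. There is therefore no in-paper argument to compare yours against; what you have written is a compressed sketch of the GHKK proof itself, and it has to be judged on those terms.

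As such a sketch, you correctly identify the framework (the cluster scattering diagram, theta functions as generating series over broken lines, and the fact that a reddening sequence realises the negative chamber $-\cc^+$ inside the cluster complex). But you misplace where the hypothesis actually does its work, and this is a genuine gap. Linear independence of the $\vartheta_q$ is indeed essentially unconditional (distinct leading exponents in a fixed chart), but your claim that step (iii), including \emph{spanning}, is ``essentially unconditional'' is wrong: in general the theta functions only span the \emph{middle} cluster algebra of GHKK, which may be strictly smaller than $\cu_{\tilde{Q}}$, and closing precisely this gap is the conditional content of the theorem. In GHKK the mechanism is not a direct bound on the walls met by broken lines; it is the \emph{Enough Global Monomials} condition and the convexity machinery of their Section~8: the presence of both $\cc^+$ and $-\cc^+$ as chambers of the cluster complex supplies enough global monomials to cut out bounded polytopes, which simultaneously forces each $\vartheta_q$ to be a universal Laurent polynomial and makes the leading-term induction --- showing that every element of $\cu_{\tilde{Q}}$ is a finite linear combination of theta functions --- terminate. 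Your concluding ``dimension count against the $g$-vector filtration'' is not available in this infinite-dimensional setting; that terminating induction is the step you would still need to supply.
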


The basis constructed by Gross--Hacking--Keel--Kontsevich is known
as the {\em theta basis}. 
{\em Generic bases} were first considered by Dupont \cite{Dupont11} for acyclic
quivers. They are constructed using generic values of 
cluster characters \cite{CalderoChapoton06, Plamondon16}.
In \cite{GeissLeclercSchroeer12}, Geiss--Leclerc--Schr\"oer showed
that they exist for large classes of cluster algebras arising in Lie theory
and coincide with Lusztig's dual semi-canonical bases. Plamondon
showed in \cite{Plamondon13} that generic bases are also canonically
parametrized by the tropical points of the cluster Poisson variety.

\begin{theorem}[Qin \cite{Qin19}] Let $Q$ be an (equivalued) quiver.
 If $Q$ has a reddening sequence,
then the upper cluster algebra $\cu_{\tilde{Q}}$ admits a generic basis parametrized 
by the tropical points of the cluster Poisson variety associated with $\tilde{Q}$.
\end{theorem}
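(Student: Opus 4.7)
The plan is to construct, for each tropical point $g \in \Z^n$ of the cluster Poisson variety, a canonical element $L_g \in \cu_{\tilde{Q}}$ whose Laurent expansion in the initial cluster $x$ is \emph{pointed} of degree $g$, meaning it has a unique maximal monomial $x^g$ under the dominance order on $\Z^n$ (induced by $B_{\tilde{Q}}$), and then to prove that these elements form a basis. I would work in the Plamondon generalized cluster category $\cc_Q$ associated with an initial non-degenerate potential on $Q$, whose cluster-tilting object $T$ realizes the initial seed. For $g \in \Z^n$, the objects $V \in \opname{pr} T$ of index $g$ form an irreducible constructible family, and I set $L_g = \opname{CC}(V_g)$ for $V_g$ a generic such object, where $\opname{CC}$ is Plamondon's cluster character. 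By construction $L_g$ lies in the upper cluster algebra, is invariant under generic choice, and is pointed with leading term $x^g$.

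Step one would establish \emph{linear independence}: since distinct $g$'s produce distinct leading monomials under the dominance order, any finite linear dependence collapses by looking at a maximal index. Step two would establish that the $L_g$ lie in $\cu_{\tilde{Q}}$, by invoking Plamondon's proof that his cluster characters satisfy the Laurent phenomenon in every seed. Neither step uses the reddening hypothesis.

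The crucial step three is \emph{spanning}, and this is where the reddening sequence $\bf{i}$ enters essentially. The strategy is to compare the $L_g$ with the cluster monomials of every seed. The mutation sequence $\mu_{\bf i}$ returns $\hat Q$ to a frozen-isomorphic copy of $\widecheck{Q}$ via Prop.~\ref{prop:permutation}, which translates, on the level of $g$-vectors and $c$-vectors (whose sign-coherence is known unconditionally after \cite{GrossHackingKeelKontsevich18}), into the statement that all iterated mutations of the initial $g$-vectors $\{e_1,\dots,e_n\}$ eventually become $\{-e_{\sigma(i)}\}$. Using the tropical duality between $c$- and $g$-vectors along a reddening sequence, one then shows that every cluster monomial of every seed is of the form $L_g$ for an appropriate $g$, and that every element of $\cu_{\tilde{Q}}$ can be written as a (possibly infinite, but pointwise finite) combination of $L_g$'s: the reddening sequence prevents the existence of ``ghost'' elements that are not detected by any iterated seed, by giving a global bound that synchronizes all seeds with the initial one.

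The main obstacle will be making the spanning argument precise. The difficulty is to combine (i) the categorical construction of $L_g$ as a generic value, (ii) the combinatorics of $g$-vectors along the reddening sequence, and (iii) the Laurent-positivity/triangularity statements needed to control how an arbitrary upper cluster algebra element decomposes in the $L_g$ basis. The reddening sequence is the hypothesis that closes this loop: without it, one constructs the $L_g$ and shows pointedness and linear independence, but one cannot guarantee that they span (indeed, cluster monomials alone do not span in general). With a reddening sequence, the tropical points of the cluster Poisson variety are exhausted by $g$-vectors accessible from the initial seed, and the pointed triangular structure forces the expansion of any element of $\cu_{\tilde{Q}}$ to terminate and lie in the $\Z$-span of the $L_g$.
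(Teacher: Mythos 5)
This theorem is quoted in the survey from Qin's paper \cite{Qin19} and is not proved there, so there is no in-paper proof to compare against; I can only assess your outline against Qin's actual argument. Your overall architecture is the standard (and correct) one: define $L_g$ as the generic value of Plamondon's cluster character on objects of index $g$ in the generalized cluster category, observe that each $L_g$ is pointed with leading term $x^g$ under the dominance order, deduce linear independence from the distinct leading terms, and get membership in $\cu_{\tilde{Q}}$ from Plamondon's Laurent-phenomenon results. Steps one and two are fine and, as you say, do not use the hypothesis.

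The gap is exactly where you locate it, in step three, and your sketch of how to close it is not just incomplete but pointed in the wrong direction. The claim that ``the tropical points of the cluster Poisson variety are exhausted by $g$-vectors accessible from the initial seed'' is false in general: the tropical point set is all of $\Z^n$, whereas the $g$-vectors of reachable cluster monomials sweep out only the cones of the cluster fan, which do not cover $\Z^n$ outside finite type. Most of the $L_g$ are therefore not cluster monomials, and spanning cannot be obtained by comparing with cluster monomials of iterated seeds. What the reddening sequence (equivalently, injective-reachability in Qin's terminology) actually buys is a support bound: for any pointed element of $\cu_{\tilde{Q}}$ of degree $g$, the exponents occurring in its Laurent expansion are confined to a dominance-order interval $[\delta,g]$ whose lower end $\delta$ is controlled by comparing the expansion in the initial seed with the expansion in the seed obtained by applying the reddening sequence; one then proves such intervals are finite and runs a descending induction on the dominance order, subtracting off $L_{g'}$ term by term, to show that any pointed element (and hence any element of $\cu_{\tilde{Q}}$, after decomposing into pointed pieces) is a finite $\Z$-linear combination of the $L_g$. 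Your phrase about ``a global bound that synchronizes all seeds'' gestures at this, but the bound, the finiteness of the dominance intervals, and the induction are the entire content of the theorem and are missing from the proposal.
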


Notice that these results concern the upper cluster algebra.
It is expected that the existence of a maximal green, or reddening, sequence
should have implications for the relationship between the cluster algebra
and the upper cluster algebra. Notice however that this relationship depends
on the choice of coefficients 
\cite[Example 8.3]{Fei16} \cite{BucherMachacekShapiro18}, whereas
the oriented exchange graph, and thus the existence of a maximal
green sequence, does not \cite{CerulliKellerLabardiniPlamondon13}.
A conjecture on the precise relationship is formulated in
\cite{Mills18a}.

\section{Existence and properties}
\label{s:existence}

\subsection{Existence} Suppose that $Q$ is an acyclic quiver.
A {\em source sequence} for $Q$ is an enumeration of the vertices
of $Q$ which is increasing for the partial order defined by the
existence of a path. It is easy to check that each source sequence
is a maximal green sequence for $Q$, cf.~\cite{BruestleDupontPerotin14}.

Recall that a valued quiver is cluster-finite (i.e. the associated cluster algebra
has only finitely many cluster variables) iff it is mutation-equivalent
to an orientation of a Dynkin diagram \cite{FominZelevinsky03}. As observed in
in \cite{BruestleDupontPerotin14}, it is immediate from
\cite{ChapotonFominZelevinsky02} that cluster-finite valued quivers have
maximal green sequences.

For valued quivers mutation-equivalent to orientations of extended
Dynkin diagrams, there are only finitely many maximal green
sequences \cite{BruestleHermesIgusaTodorov17}. The same holds for 
(equally valued) acyclic quivers \cite{IgusaZhou19}.

Two canonical maximal green sequences exist for the square
product of two alternating valued Dynkin quivers, cf.~section~5 of \cite{Keller17}.

If $R$ is an ayclic quiver and $\tilde{w}$ a reduced expression for an element
of the Coxeter group associated with the underlying graph of $R$, there is
a canonical quiver $Q$ associated with the pair $(R, \tilde{w})$. It serves
to obtain a cluster structure on the coordinate algebra of the unipotent cell
associated with $w$ in the Kac--Moody group determined by $R$,
cf.~\cite{BuanIyamaReitenScott09} and
\cite{GeissLeclercSchroeer11b}. It is closely related to the 
(upper) cluster structures on Bruhat cells obtained in
\cite{BerensteinFominZelevinsky05}.
In section~13 of \cite{GeissLeclercSchroeer11b}, Geiss--Leclerc--Schr\"oer
exhibit a canonical reddening sequence for $Q$ (we conjecture that it is
actually maximal green). 

In analogy with the definition of a full subcategory, one defines a 
{\em full subquiver $Q'$} of a quiver $Q$ to be a subquiver containing all
the arrows in $Q$ between any two of its vertices.

\begin{theorem}[Muller \cite{Muller16}] \label{thm:Muller}
If $Q$ has a maximal green 
(resp. reddening) sequence,
then each full subquiver $Q'\subseteq Q$ has a maximal green 
(resp. reddening) sequence.
\end{theorem}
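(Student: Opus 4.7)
The plan is to translate the problem into a question about chains in the lattice of torsion classes of an associated finite-dimensional algebra, where the interaction with full subquivers can be controlled using brick labels. Fix a generic non-degenerate potential $W$ on $Q$ and let $\Lambda = J(Q,W)$ be its Jacobian algebra. By the refinement of \cite{DemonetIyamaReadingReitenThomas17} carried out in the appendix, a maximal green sequence for $Q$ is equivalent to a finite maximal chain
\[
\{0\} = \ct_0 \lessdot \ct_1 \lessdot \cdots \lessdot \ct_N = \mod \Lambda
\]
in the lattice $\tors \Lambda$, or equivalently to a finite maximal forward $\Hom$-orthogonal sequence of bricks $B_1, \ldots, B_N$, where $B_i$ is the brick label of the covering $\ct_{i-1} \lessdot \ct_i$.

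For a full subquiver $Q' \subseteq Q$ with $F = Q_0 \setminus Q'_0$, I would first invoke Labardini-Fragoso \cite{Labardini09a} to arrange, for a sufficiently generic $W$, that the restricted potential $W' = W|_{Q'}$ is non-degenerate on $Q'$ and that its Jacobian algebra $\Lambda' = J(Q',W')$ is the idempotent quotient $\Lambda/\langle e_i \mid i \in F\rangle$. Then $\mod \Lambda'$ embeds in $\mod \Lambda$ as the Serre subcategory of modules supported on $Q'_0$, and the bricks of $\mod \Lambda'$ are precisely those bricks of $\mod \Lambda$ whose support lies in $Q'_0$. The natural restriction map
\[
r : \tors \Lambda \longrightarrow \tors \Lambda'\ko\quad r(\ct) = \ct \cap \mod \Lambda'\ko
\]
is a surjective, order-preserving map of complete lattices sending $\{0\}$ to $\{0\}$ and $\mod \Lambda$ to $\mod \Lambda'$.

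The crux is a brick-label analysis of $r$: for each covering $\ct_{i-1} \lessdot \ct_i$ with label $B_i$, I would show that $r(\ct_{i-1}) \leq r(\ct_i)$ is a covering in $\tors \Lambda'$ with the same brick label $B_i$ when $B_i \in \mod \Lambda'$, and is an equality otherwise. Feeding the hypothetical maximal chain into $r$ and deleting repetitions then produces a finite maximal chain in $\tors \Lambda'$ from $\{0\}$ to $\mod \Lambda'$, i.e.\ a maximal green sequence for $Q'$. The hardest step is precisely this brick-label comparison: one must rule out that restriction of a single covering in $\tors \Lambda$ yields a strictly longer chain of coverings in $\tors \Lambda'$. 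This is where the brick-labelling theorem of \cite{DemonetIyamaReadingReitenThomas17}, combined with the identification $\Lambda' = \Lambda/\langle e_i \mid i \in F\rangle$, is essential. The reddening case is handled by the same argument applied to finite maximal chains in $\tors \Lambda$ terminating at $\mod \Lambda$, rather than chains from $\{0\}$ to $\mod \Lambda$.
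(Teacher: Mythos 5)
Your proposal follows essentially the same route as the paper's proof in Section~\ref{s:proof}: translate (maximal) green sequences into finite paths from $0$ (to $\mod A$) in the Hasse quiver of torsion classes via Nagao's theorem together with Adachi--Iyama--Reiten and Demonet--Iyama--Jasso, use Labardini-Fragoso to see that the restricted potential is non-degenerate and that the Jacobi algebra of $Q'$ is the idempotent quotient of that of $Q$, and then apply the Demonet--Iyama--Reading--Reiten--Thomas contraction, whose proof is exactly your brick-label analysis showing that $\ct\mapsto\ct\cap\mod(A/I)$ sends each cover to a cover or an equality. The one step you gloss over is why the Jacobi algebra is finite-dimensional at all (it is not in general) --- this follows from the existence of the reddening sequence (Br\"ustle--Dupont--P\'erotin) and is needed before the torsion-class machinery applies.
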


\begin{remark} \label{rk:support}
Muller's proof shows that more precisely, starting from a maximal green sequence
$(k_1, \ldots, k_N)$  for $Q$, one obtains a maximal green sequence for $Q'$ 
as follows: Let $(c_1, \ldots, c_N)$ be the sequence of $c$-vectors associated
with the given sequence $(k_1, \ldots, k_N)$. Form the subsequence
$(c'_1, \ldots, c'_M)$ of $(c_1, \ldots, c_N)$ formed by those vectors
supported on $Q'$. Then there is a unique sequence of mutations
$(k'_1, \ldots, k'_M)$ of $Q'$ whose associated sequence of $c$-vectors
is $(c'_1, \ldots, c'_M)$. The sequence $(k'_1, \ldots, k'_M)$ is the
required maximal green sequence for $Q'$.
\end{remark}

Muller's proof uses the existence and uniqueness of the scattering diagram \cite{KontsevichSoibelman06a,
GrossSiebert11} associated with a quiver \cite{GrossHackingKeelKontsevich18}. 
We will give a representation-theoretic proof in section~\ref{s:proof}.

Let $Q$ be a quiver and $Q'$, $Q''$ full subquivers. We say that
$Q$ is a {\em triangular extension} of $Q'$ by $Q''$ if the set of vertices
of $Q$ is the disjoint union of the sets of vertices of $Q'$ and $Q''$ and
there are no arrows from vertices of $Q''$ to vertices of $Q'$.
After pioneering work in 
\cite{GarverMusiker17}, the following theorem was proved in
\cite{CaoLi17} using Muller's theorem \ref{thm:Muller}.

\begin{theorem} If $Q$ is a triangular extension of $Q'$ by $Q''$, then
$Q$ has a maximal green sequence if and only if $Q'$ and $Q''$ have
maximal green sequences.
\end{theorem}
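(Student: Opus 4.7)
The plan is to treat the two directions separately. The forward direction is immediate: since $Q'$ and $Q''$ are full subquivers of $Q$, Muller's Theorem~\ref{thm:Muller} applied twice furnishes maximal green sequences for each. The substance lies in the converse, which I would argue via torsion classes.

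The hypothesis that $Q$ is a triangular extension of $Q'$ by $Q''$ gives rise, in the relevant module category $\mod\Lambda$ (with $\Lambda$ the Jacobian algebra attached to $Q$ and a suitable potential, reducing to $kQ$ when $Q$ is acyclic), to a torsion pair $(\ct'',\ct')$, where $\ct''=\mod\Lambda''$ consists of the representations supported on the $Q''$-vertices and the torsion-free class $\ct'=\mod\Lambda'$ consists of those supported on the $Q'$-vertices. (The absence of arrows $Q''\to Q'$ forbids cycles crossing from $Q'$ to $Q''$, so the potential splits as $W=W'+W''$.) Standard torsion-pair theory then identifies the interval $[0,\ct'']$ in $\tors(\mod\Lambda)$ with $\tors(\mod\Lambda'')$, and the interval $[\ct'',\mod\Lambda]$ with $\tors(\mod\Lambda/\ct'')\simeq\tors(\mod\Lambda')$; both identifications preserve the covering relation.

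Given maximal green sequences $\bf{k}'$ for $Q'$ and $\bf{k}''$ for $Q''$, Demonet's appendix translates them into maximal chains $C''$ in $\tors(\mod\Lambda'')$ and $C'$ in $\tors(\mod\Lambda')$, realized as maximal forward $\Hom$-orthogonal sequences of bricks. I would concatenate the image of $C''$ in $[0,\ct'']$ with the image of $C'$ in $[\ct'',\mod\Lambda]$ to obtain a maximal chain in $\tors(\mod\Lambda)$, and the appendix applied in reverse converts this into a maximal green sequence for $Q$. The main obstacle I anticipate is verifying that the interval identifications preserve coverings, so the concatenation remains maximal even across the boundary $\ct''$; both the identifications and the covering-preservation are standard torsion-pair bookkeeping, in the spirit of the representation-theoretic proof of Muller's theorem given in section~\ref{s:proof}.
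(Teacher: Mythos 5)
The paper states this theorem without proof, citing Cao--Li \cite{CaoLi17}; their argument for the nontrivial direction is combinatorial (one shows, via uniformly column sign-coherent $C$-matrices, that running a maximal green sequence of $Q''$ inside $\hat{Q}$ does not disturb the $Q'$-columns, so the two mutation sequences can be concatenated), while the ``only if'' direction is Muller's Theorem~\ref{thm:Muller}, exactly as in your proposal. Your converse is therefore a genuinely different, representation-theoretic route in the spirit of section~\ref{s:proof}, and it works. The points you flag as needing verification do check out: since there are no arrows from $Q''$ to $Q'$, every path of $Q$ crosses from $Q'$ to $Q''$ at most once, the potential splits as $W=W'+W''$ with both summands non-degenerate by Corollary~22 of \cite{Labardini09a}, and one of $\mod A'$, $\mod A''$ (which one depends on the module convention; say $\mod A''$) is a Serre subcategory which is a torsion class with the other as torsion-free class. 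The identification $[0,\mod A'']\cong\tors(A'')$ is the fact, already used in section~\ref{s:proof}, that torsion classes of a quotient algebra are exactly the torsion classes of $A$ contained in $\mod A''$; the identification $[\mod A'',\mod A]\cong\tors(A')$ is $\ct\mapsto\ct\cap\mod A'$ with inverse $\cs\mapsto\{M \mid M/t''M\in\cs\}$, using that restriction to $Q'$, i.e.\ $M\mapsto M/t''M$, is right exact. Finally, a cover inside an interval of a poset is automatically a cover in the ambient poset, so the concatenated chain is a path from $0$ to $\mod A$ in $\Hasse(\tors A)$ and nothing further needs to be checked at the boundary $\mod A''$.

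The one step you omit that genuinely requires an argument is the finite-dimensionality of $A$: Theorem~\ref{thm:bijection} (which is the tool that converts your path back into a maximal green sequence more directly than the appendix, the latter being designed for possibly infinite maximal chains) assumes $\dim A<\infty$, and you cannot invoke Corollary~\ref{cor:finite-dimensional} for $Q$ since the existence of a reddening sequence for $Q$ is precisely what you are trying to establish. The claim is nonetheless true: $A'$ and $A''$ are finite-dimensional by Corollary~\ref{cor:finite-dimensional}, every path of $Q$ contains at most one crossing arrow $\beta$, the cyclic derivatives of $W=W'+W''$ involve only arrows of $Q'$ or of $Q''$, and hence the space of crossing paths in $A$ is a quotient of $\bigoplus_\beta A''\ten_k A'$, so that $A$ is finite-dimensional as soon as $A'$ and $A''$ are. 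With that supplied, your proof is complete.
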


We refer to \cite{BucherMachacek18, BucherMachacekRunburgYeckZewde19}
for recent extensions of this theorem. The case of mutation-finite quivers
is treated in section~\ref{ss:mutation-finite} below.

\subsection{Preservation under mutations} We have the following
`rotation lemma'.

\begin{lemma}[Br\"ustle--Hermes--Igusa--Todorov \cite{BruestleHermesIgusaTodorov17}]
If $\bf{i}=(i_1, \ldots, i_N)$ is maximal green (resp. reddening) for a quiver $Q$,
then $(i_2, \ldots, i_N, k)$ is maximal green (resp. reddening)
for $\mu_{i_1}(Q)$, where $k$ is
the target of the unique arrow with source $i_1'$ in $\mu_{\bf{i}}(\hat{Q})$.
\end{lemma}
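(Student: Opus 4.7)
The plan is to track two framed quivers in parallel: $\hat{Q}$ under the original sequence $\bf{i}$, and $\hat{\mu_{i_1}(Q)}$ under the proposed rotated sequence $(i_2, \ldots, i_N, k)$. The central observation is that although $\mu_{i_1}(\hat{Q})$ and $\hat{\mu_{i_1}(Q)}$ are not equal, they share the non-frozen part $\mu_{i_1}(Q)$ and coincide at every frozen vertex $j'$ with $j \ne i_1$; by the matrix mutation rule (\ref{eq:matrix-mutation}), the only discrepancy occurs at the single frozen vertex $i_1'$. Explicitly, $\mu_{i_1}(\hat{Q})$ has a reversed framing arrow $i_1' \to i_1$ together with extra arrows $a \to i_1'$, one for each $a$ with $a \to i_1$ in $Q$, whereas $\hat{\mu_{i_1}(Q)}$ has only the standard framing arrow $i_1 \to i_1'$.

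The next step is a locality principle: if two ice quivers share the same non-frozen part and coincide at every frozen vertex except a single $f$, then after any mutation at a non-frozen vertex they continue to share the same non-frozen part and to coincide at every frozen vertex different from $f$. This is immediate from (\ref{eq:matrix-mutation}), which only mixes the row/column of $f$ with those of the non-frozen vertices. Applied to the pair $(\mu_{i_1}(\hat{Q}), \hat{\mu_{i_1}(Q)})$ along $\mu_{i_2}, \ldots, \mu_{i_N}$, this yields ice quivers $R = \mu_{\bf{i}}(\hat{Q})$ and $R' = \mu_{i_N}\cdots\mu_{i_2}(\hat{\mu_{i_1}(Q)})$ with the same non-frozen part and coinciding at every frozen vertex $\ne i_1'$. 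By Proposition~\ref{prop:permutation} applied to the reddening sequence $\bf{i}$, $R$ is frozen-isomorphic to $\widecheck{Q}$: all its non-frozen vertices are red, and the unique arrow out of $i_1'$ has target $k$. Consequently, every non-frozen vertex $j \ne k$ of $R'$ is also red, so only the status of $k$ and the precise arrows at $i_1'$ remain to be examined.

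Finally, I would compute the arrows at $i_1'$ in $R'$ explicitly, using that they are produced from those of $\hat{\mu_{i_1}(Q)}$ by exactly the same combinatorial recipe that produces the arrows at $i_1'$ of $R$ from those of $\hat{Q}$. From this one deduces that $k$ is green in $R'$ and that $\mu_k(R')$ is frozen-isomorphic to $\widecheck{\mu_{i_1}(Q)}$, establishing that $(i_2, \ldots, i_N, k)$ is reddening for $\mu_{i_1}(Q)$. For the stronger maximal-green assertion, greenness of $i_t$ at each intermediate stage of the rotated sequence is checked similarly, using that the evolution of the arrows at $i_1'$ is controlled by the locality principle together with the original greenness of $i_t$ along $\bf{i}$. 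The main obstacle is this final combinatorial verification: one must show that the extra arrows at $i_1'$ accumulated during $\mu_{i_1}$ are cancelled precisely by the final $\mu_k$ so as to recover the canonical coframing. A conceptually cleaner alternative would be to reformulate the whole argument in terms of $c$-vectors, where rotation becomes a simple reindexing, but the direct combinatorial route sketched above is also feasible.
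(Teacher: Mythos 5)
The paper quotes this lemma from Br\"ustle--Hermes--Igusa--Todorov without proof, so your argument can only be judged on its own terms. The setup is correct and well chosen: $\mu_{i_1}(\hat{Q})$ and $\hat{\mu_{i_1}(Q)}$ do share the non-frozen part $\mu_{i_1}(Q)$ and agree at every frozen vertex other than $i_1'$, and your locality principle is a genuine consequence of the rule~(\ref{eq:matrix-mutation}), since the row and column of one frozen vertex never enter the update of the row and column of another. Together with sign-coherence this even justifies your claim that every non-frozen $j\neq k$ is red in $R'$: such a $j$ still receives the arrow $\sigma^{-1}(j)'\to j$ inherited from $R\cong\widecheck{Q}$, so it is not green, hence red.

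But the argument stops exactly where the lemma begins. Everything now rests on the column of $i_1'$, and the method you propose for computing it --- that it is ``produced by exactly the same combinatorial recipe'' as in the original sequence --- is not available. The update $b'_{j,i_1'}=b_{j,i_1'}+\sgn(b_{ji_t})\max(0,b_{ji_t}b_{i_t,i_1'})$ is only \emph{piecewise} linear in the $i_1'$-column, with the branch selected by the signs of its entries; the two initial columns (the standard framing column $e_{i_1}$ of $\hat{\mu_{i_1}(Q)}$ versus the column of $\mu_{i_1}(\hat{Q})$, which already carries the reversed arrow and the extra arrows $a\to i_1'$) therefore evolve along different branches, and neither evolution determines the other. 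This unproved computation is needed three separate times: to show each $i_t$ is green at stage $t$ of the rotated sequence (the locality principle plus sign-coherence settles this \emph{except} when the original $c$-vector of $i_t$ is supported only at coordinate $i_1$, a case you never exclude), to show $k$ is green in $R'$, and to show that all vertices of $\mu_k(R')$ are red. The known proofs supply precisely this missing control, either through the transformation rule for $C$-matrices under a change of initial seed --- a consequence of sign-coherence and tropical duality, not of the raw mutation rule --- or through the representation-theoretic reading of $c$-vectors as dimension vectors of bricks. Your closing suggestion that in terms of $c$-vectors ``rotation becomes a simple reindexing'' understates the same difficulty: the $c$-vectors of the rotated sequence are taken with respect to a different initial seed and are related to $(c_2,\dots,c_N,-c_1)$ only after a nontrivial linear change of basis, which is again the content one has to prove. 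As it stands, the proposal is a correct reduction, not a proof.
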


\begin{theorem}[Muller \cite{Muller16}] 
If $Q$ admits a reddening sequence,
then each quiver mutation-equivalent to $Q$ admits a reddening sequence.
\end{theorem}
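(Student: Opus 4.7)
The plan is to reduce, by induction on the length of a connecting mutation sequence, to the single-mutation case: if $Q$ admits a reddening sequence, then so does $\mu_k(Q)$ for every vertex $k$.

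My first instinct would be the naive construction: given a reddening sequence $\bf{i}=(i_1, \ldots, i_N)$ for $Q$, try the prepended sequence $(k, i_1, \ldots, i_N)$ applied to $\hat{Q'}$ where $Q' = \mu_k(Q)$. A brief computation shows this fails, because $\mu_k(\hat{Q'})$ and $\hat{Q}$ differ in the arrangement of arrows at the frozen vertex $k'$ (in $\mu_k(\hat{Q'})$ the arrow $k \to k'$ has been reversed and composite arrows $j \to k'$ have appeared). Consequently the subsequent mutations $i_1, \ldots, i_N$ do not retrace the reddening behaviour they have starting from $\hat{Q}$; in particular the framing at $k'$ is wrong at the end.

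The Rotation Lemma provides only a partial remedy: iterating it along $\bf{i}$ yields reddening sequences for each quiver $\mu_{i_j} \cdots \mu_{i_1}(Q)$, but by Proposition~\ref{prop:permutation} these quivers form a single orbit closing up on $Q$ itself (up to a permutation), and in general they do not exhaust the mutation class. To reach every quiver in the mutation class one therefore needs a more global argument.

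My proposed approach is to pass to the cluster scattering diagram, or equivalently the mutation fan $\mathfrak{F}_Q \subset \R^n$: by Proposition~\ref{prop:permutation}, a reddening sequence for $Q$ is precisely a sequence of adjacent chambers in $\mathfrak{F}_Q$ leading from the positive chamber (for $\hat{Q}$) to the negative chamber (for $\widecheck{Q}$). The key structural input, established in \cite{GrossHackingKeelKontsevich18}, is the mutation invariance of $\mathfrak{F}_Q$: a piecewise-linear homeomorphism $\psi_k : \R^n \to \R^n$ identifies $\mathfrak{F}_{\mu_k(Q)}$ with $\mathfrak{F}_Q$ and sends the positive, resp.\ negative, chamber of $\mathfrak{F}_{\mu_k(Q)}$ to specific chambers of $\mathfrak{F}_Q$ (those recording the $g$-vectors of the seeds obtained from $\hat{Q}$, resp.\ $\widecheck{Q}$, by a single mutation at $k$). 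A reddening sequence for $\mu_k(Q)$ is then assembled by concatenating, inside $\mathfrak{F}_Q$, a short chamber path from the image of the positive chamber back to the positive chamber of $\mathfrak{F}_Q$, the given reddening sequence for $Q$ itself, and a short path from the negative chamber of $\mathfrak{F}_Q$ to the image of the negative chamber; transporting the composite path via $\psi_k^{-1}$ yields the desired sequence. The hard part will be the mutation invariance $\psi_k(\mathfrak{F}_{\mu_k(Q)}) = \mathfrak{F}_Q$, which is the deep content behind the statement and the reason the theorem rests on the scattering-diagram machinery rather than on the Rotation Lemma alone.
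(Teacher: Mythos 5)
Your scattering-diagram argument is sound in outline, and it is in fact essentially the route of Muller's cited proof; the paper itself states this theorem with a reference to \cite{Muller16} and does not reproduce an argument, so there is nothing for your proposal to contradict. However, your reason for setting aside the rotation lemma is mistaken, and that lemma does give a complete, elementary proof. The observation you are missing is that a reddening sequence, unlike a maximal green sequence, may be prepended with the pair $(k,k)$ for an \emph{arbitrary} vertex $k$: since $\mu_k$ is an involution on framed quivers, the sequence $(k,k,i_1,\ldots,i_N)$ ends at the same quiver $\mu_{\bf{i}}(\hat{Q})$ and is therefore again reddening. Applying the rotation lemma once to this new sequence, whose first entry is now the chosen vertex $k$, yields the reddening sequence $(k,i_1,\ldots,i_N,\sigma(k))$ for $\mu_k(Q)$, where $\sigma$ is the permutation of Proposition~\ref{prop:permutation}; induction on the length of a connecting mutation sequence then finishes the proof, so no global argument is needed. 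Note that this is exactly the chamber path you assemble inside $\mathfrak{F}_Q$ (one step in from the image of the positive chamber, the old path, one step out to the image of the negative chamber), so the two arguments produce the same mutation sequence; the elementary one simply avoids invoking the mutation invariance of the scattering diagram from \cite{GrossHackingKeelKontsevich18}. It also makes transparent why the analogous statement for maximal green sequences fails: the second mutation in the prefix $(k,k)$ is performed at a red vertex.
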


However, the analogous statement for maximal green sequences is false,
as we will see below.

\subsection{Non existence} For three non negative integers $a$, $b$, $c$, denote
by $Q_{a,b,c}$ the quiver with three vertices $1$, $2$, $3$ and $a$ arrows from
$1$ to $2$, $b$ arrows from $2$ to $3$ and $c$ arrows from $3$ to $1$
(cf. \cite{Seven14} for a study of the case of valued quivers with
$3$ vertices).
As shown in \cite{BruestleDupontPerotin14}, the quiver $Q_{2,2,2}$ does
not admit a maximal green sequence (nor does it admit a reddening sequence).
Muller shows \cite{Muller16} that none of the quivers $Q_{a,b,c}$ with
all three numbers $a$, $b$, $c\geq 2$ admits a maximal green sequence
On the other hand, the quiver $Q_{2,2,3}$ is mutation-acyclic
(the quiver $Q_{a,b,c}$ is mutation-acyclic iff $\min(a,b,c)<2$ or
$a^2+b^2+c^2-abc>4$ as shown in 
\cite{BeinekeBruestleHille11}). Thus, the existence of a maximal green
sequence is not preserved under mutation \cite{Muller16}.

\subsection{Existence and non existence for mutation-finite quivers}
\label{ss:mutation-finite}
Generalizing the example of $Q_{2,2,2}$, Ladkani has shown in
\cite{Ladkani13} that the quivers associated \cite{FominShapiroThurston08}
with once-punctured
surfaces of arbitrary genus (without boundary) do not admit
reddening sequences (his proof is based on the work of Labardini--Fragoso
\cite{Labardini09a} and Corollary~\ref{cor:finite-dimensional} below).
Seven shows in \cite{Seven14a} that the quivers in the mutation class
of the quiver $X_7$ (discovered in \cite{DerksenOwen08}) 
do not admit maximal green sequences (and presumably
no reddening sequences either).

Partial results in the direction of the following theorem
were obtained in \cite{CecottiEtAl14, Bucher14, BucherMills18,CormierDilleryReshSerhiyenko16, GarverMusiker17}.

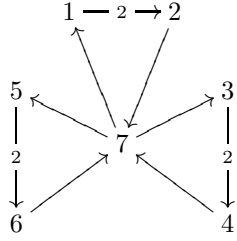
\begin{figure}
\[
\begin{xy} 0;<0.4pt,0pt>:<0pt,-0.4pt>:: 
(50,0) *+{1} ="0",
(150,0) *+{2} ="1",
(200,75) *+{3} ="2",
(200,200) *+{4} ="3",
(0,75) *+{5} ="4",
(0,200) *+{6} ="5",
(100,125) *+{7} ="6",
"0", {\ar|*+{\scriptstyle 2}"1"},
"6", {\ar"0"},
"1", {\ar"6"},
"2", {\ar|*+{\scriptstyle 2}"3"},
"6", {\ar"2"},
"3", {\ar"6"},
"4", {\ar|*+{\scriptstyle 2}"5"},
"6", {\ar"4"},
"5", {\ar"6"},
\end{xy}
\]
\caption{The quiver $X_7$ (double arrows are marked with $2$)}
\end{figure}

\begin{theorem}[Mills \cite{Mills17}]
If $Q$ is a mutation-finite quiver, it has a maximal green sequence except 
if it comes from a once-punctured closed surface of genus $\geq 1$ 
or is in the mutation class of $X_7$.
\end{theorem}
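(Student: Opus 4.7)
The plan is to combine the Felikson--Shapiro--Tumarkin classification of mutation-finite quivers with the non-existence results of Ladkani and Seven, and to fill in the positive direction by explicit and inductive constructions. By \cite{FeliksonShapiroTumarkin12, FeliksonShapiroTumarkin12a}, every mutation-finite quiver either arises from a tagged triangulation of a marked surface $(\Sigma, M)$ or belongs to one of $11$ exceptional mutation classes. The non-existence half of the statement is already in hand: Ladkani \cite{Ladkani13} rules out reddening sequences (hence MGS) for once-punctured closed surfaces of genus $\geq 1$, and Seven \cite{Seven14a} rules out MGS for the mutation class of $X_7$. So only the existence assertion needs proof.

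First I would dispose of the $10$ remaining exceptional mutation classes by a finite computer check. Each such class contains finitely many quivers on at most a few tens of vertices, and both \cite{Keller06e} and \cite{MusikerStump11} can certify an MGS for every representative. For the surface case, I would argue by induction on a complexity measure (such as $3g+3b+|M|$), with base cases being disks with few marked points, annuli with few marked points, and the sphere with $\leq 4$ punctures; in each of these cases an MGS can be written down by hand, or the quiver is cluster-finite or acyclic so that the source-sequence argument of \cite{BruestleDupontPerotin14} applies directly.

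For the inductive step, suppose first that $\Sigma$ has non-empty boundary. Choose a triangulation containing an arc $\gamma$ that splits $\Sigma$ into two pieces of smaller complexity; the associated quiver is then a triangular extension of the quivers of the two pieces, and the Cao--Li theorem produces the required MGS. The main obstacle is the remaining case of a closed surface with $p \geq 2$ punctures, where no single arc decomposes $(\Sigma,M)$ into pieces already handled. Here I would follow the strategy of \cite{BucherMills18, CormierDilleryReshSerhiyenko16}: pick a puncture $p_0$ together with a tagged triangulation in which the arcs incident with $p_0$ form a cleanly separated local configuration, first run an MGS on the full subquiver obtained by deleting the $p_0$-neighbourhood (whose MGS is furnished by Muller's Theorem~\ref{thm:Muller} applied to an auxiliary surface with boundary), and then splice in a short local MGS that clears the remaining vertices around $p_0$. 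The delicate step is verifying that the spliced sequence stays green at every mutation: this requires tracking $c$-vectors across flips, invoking sign-coherence, and repeatedly applying the rotation lemma of \cite{BruestleHermesIgusaTodorov17} to re-index the tail after each local clearing. The obstruction to this argument collapses exactly in the excluded case $p=1$, $g\geq 1$, which is consistent with Ladkani's non-existence result and pinpoints where the dichotomy in the theorem comes from.
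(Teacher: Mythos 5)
The skeleton of your argument (the Felikson--Shapiro--Tumarkin classification, Ladkani and Seven for the two excluded families, a finite check for the remaining exceptional classes, and a constructive argument for surfaces) is the right one and matches how Mills \cite{Mills17} organizes the proof. But the surface half has a structural gap. The theorem asserts the existence of a maximal green sequence for \emph{every} quiver in the mutation class, and, as this survey itself stresses, MGS existence is \emph{not} a mutation invariant. So you are not free to ``choose a triangulation containing an arc $\gamma$ that splits $\Sigma$'': producing an MGS for one convenient triangulation says nothing about the other quivers in the class. The correct reduction is that, by \cite{FominShapiroThurston08}, every quiver in the class is the adjacency quiver of \emph{some} tagged triangulation, so you must run your construction on an \emph{arbitrary} tagged triangulation --- which is exactly what makes Mills's proof delicate (he builds the sequence directly from the combinatorics of an arbitrary triangulation rather than by induction on cutting).

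Two further steps would fail as written. First, a single separating arc $\gamma$ does not make the adjacency quiver a triangular extension of the quivers of the two pieces: $\gamma$ is itself a vertex, and its two adjacent triangles (one on each side) contribute arrows both into and out of $\gamma$ from arcs on both sides, so there are arrows in both directions between your two putative parts and the Cao--Li theorem does not apply. Second, for closed surfaces with $p\geq 2$ punctures, the ``run an MGS on the subquiver, then splice in a short local sequence around $p_0$'' step is precisely the hard point: a maximal green sequence for a full subquiver $Q'$, performed inside $\hat{Q}$, need not be green for $Q$, and after it the remaining vertices need not admit any green continuation. Concatenation of green sequences is exactly what fails in general (it is why $Q_{2,2,2}$ has none), and neither sign-coherence nor the rotation lemma of \cite{BruestleHermesIgusaTodorov17} supplies the missing control. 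Without an explicit mechanism --- such as Mills's direct, triangle-by-triangle construction and verification via $c$-vectors --- this part of the argument is a restatement of the difficulty rather than a proof.
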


Information on the length of the minimal length maximal green sequences
for the quivers associated with annuli or punctured disks can be found
in \cite{Kase15, GarverMcConvilleSerhiyenko17, GarverMcConvilleSerhiyenko18}.
Maximal green sequences for minimal mutation-infinite quivers are
studied in \cite{LawsonMills18}.

\section{Muller's theorem via representation theory}
\label{s:proof}

Let $Q$ be a quiver admitting a maximal green sequence and $Q'\subseteq Q$
a full subquiver. We wish to show that $Q'$ has a maximal green sequence as well.

We recall the setup of Derksen--Weyman--Zelevinsky's theory
of quivers with potentials and their mutations \cite{DerksenWeymanZelevinsky08}.
Let $k$ be an uncountable field. Let $kQ$ be the path algebra and
$J$ the two-sided ideal of $kQ$ generated by the arrows. Let
$\hat{kQ}$ be the completed path algebra, i.e. inverse limit
of the finite-dimensional quotients $kQ/J^n$, $n\geq 1$. Let
$W$ be a non degenerate potential, i.e. an element of $\hat{kQ}$
which is an infinite linear combination of cycles of length $\geq 3$
such that the pair $(Q,W)$ can be mutated indefinitely without
creating $2$-cycles in the quiver component
$Q'$ of the mutated quiver with potential $(Q',W')$. 
Let $A$ be the Jacobi algebra of $(Q,W)$,
i.e. the quotient of $\hat{kQ}$ by the closed ideal generated
by the cyclic derivatives $\partial_\alpha W$, $\alpha\in Q_1$.
Notice that in general, the algebra $A$ is infinite-dimensional.
Let $\mod A$ be the category of finite-dimensional (right) $A$-modules.
Recall that all finite-dimensional $A$-modules are nilpotent, i.e.
annihilated by a sufficiently high power of the ideal $J$
(cf. section~10 of \cite{DerksenWeymanZelevinsky08}). Therefore,
the simple objects of $\mod A$ are the simple modules
$S_i$ associated with the vertices $i\in Q_0$. Thus,
the abelian category $\mod A$ is an $\Ext$-finite length category
with finitely many simple objects, cf. \cite{Gabriel73a, KrauseVossieck17}.

Let $\ca$ be an abelian category. For a
class of objects $\cx$, we denote by $\cx^\perp$ the {\em right
orthogonal of $\cx$}, i.e. the full subcategory of objects $Y$
such that $\Hom(X,Y)=0$ for all $X\in\cx$. We denote by
$\add(\cx)$ the full subcategory formed by all direct summands
of finite direct sums of objects of $\cx$.
Recall that a {\em torsion pair} \cite{Dickson66} in $\ca$ is a pair of full 
subcategories $(\ct,\cF)$ such
that $\Hom(T,F)=0$ for all $T\in\ct$ and $F\in \cF$ and for each object
$M$ of $\ca$, there is a short exact sequence
\[
\xymatrix{0 \ar[r] & T \ar[r] & M \ar[r] & F \ar[r] & 0}
\]
with $T\in\ct$ and $F\in\cF$. In this case, $\ct$ is called a {\em torsion class}.
The torsion classes of the category of finite-dimensional modules over
a finite-dimensional algebra are precisely the full subcategories closed
under extensions and quotients \cite{Dickson66}.  A {\em brick} is 
an object whose endomorphism algebra is a
division algebra.

The link between torsion classes and clusters is already implicit
in Marsh--Reineke--Zelevinsky's
\cite{MarshReinekeZelevinsky03}. It is made completely
explicit by Ingalls--Thomas in \cite{IngallsThomas09} (in the acyclic case).
As explained in section~7.6 of \cite{Keller12a}, the following
theorem results from Nagao's work in \cite{Nagao13}.
For the case of acyclic quivers, cf. \cite{Qiu15}.

\begin{theorem}[Nagao] Each green sequence
$\bf{i}=(i_1, \ldots, i_N)$ yields an ascending chain 
\[
0 = \ct_0 \subset \ct_1 \subset \ct_2 \subset \ldots \subset \ct_N \subseteq \mod A
\]
of torsion classes such that
\begin{equation}
\label{eq:min-inclusion}
\ct_{t-1}^\perp \cap \ct_t = \add(B_t)
\end{equation}
for a unique brick $B_t$ with $\End(B_t)=k$, $\Ext^1(B_t,B_t)=0$ and such
that the dimension vector of $B_t$ is the $c$-vector associated with the
vertex $i_t$, $1 \leq t\leq N$.
Moreover, the green sequence $\bf{i}$ is maximal if and only if
$\ct_N=\mod A$.
\end{theorem}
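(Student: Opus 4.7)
The plan is to categorify the green sequence as a chain of silting objects in $\per A$ (equivalently, a chain of cluster-tilting objects in Amiot's generalized cluster category $\cc_{Q,W}$) and then translate this chain into the desired chain of torsion classes in $\mod A$ via the standard heart-silting and silting--torsion class correspondences. Start with $T_0 = A\in\per A$ and, for each partial green sequence $(i_1,\ldots,i_t)$, form $T_t = \mu^+_{i_t}\circ\cdots\circ\mu^+_{i_1}(T_0)$ by iterated Aihara--Iyama \emph{left} silting mutation. The fact that $\bf{i}$ is green is precisely what allows each step to be realised as a left (rather than right) silting mutation: this is the categorical counterpart of sign-coherence of $c$-vectors, and corresponds combinatorially to $i_t$ being green in $\mu_{i_{t-1}}\cdots\mu_{i_1}(\hat{Q})$.

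Each $T_t$ determines a bounded $t$-structure on $\der(\mod A)$ with heart $\ch(T_t)$, and a single left silting mutation changes the heart by an elementary Happel--Reiten--Smal\o{} tilt at one simple of the previous heart. Define $\ct_t = \mod A \,\cap\, \ch(T_t)$; this is a torsion class of $\mod A$, and the elementary nature of the tilt makes $\ct_{t-1}\subset\ct_t$ a covering relation in $\tors(\mod A)$ whose brick label \cite{DemonetIyamaReadingReitenThomas17} is the image $B_t\in\mod A$ of the simple $S_{i_t}^{(t-1)}$ of $\ch(T_{t-1})$ associated with the summand being mutated. The identity $\ct_{t-1}^\perp\cap\ct_t = \add(B_t)$ is then the standard characterisation of the (necessarily unique) brick label, and rigidity of the mutated summand of $T_{t-1}$ forces $\End(B_t)=k$ and $\Ext^1(B_t,B_t)=0$.

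To match the dimension vector of $B_t$ with the $c$-vector at $i_t$ in the partially mutated framed quiver, I would invoke Plamondon's cluster-categorical interpretation of $c$-vectors (equivalently, the Derksen--Weyman--Zelevinsky dimension vector formula for mutations of decorated representations): the $c$-vector at $i_t$ is the index, in the $2$-Calabi--Yau cluster category, of the summand being mutated, and under the heart equivalence this index coincides with the dimension vector in $\mod A$ of the new simple $B_t$. For the maximality statement, Proposition~\ref{prop:permutation} gives that $\bf{i}$ is maximal green iff $\mu_{\bf{i}}(\hat{Q})\simeq\widecheck{Q}$ as framed quivers, which categorifies to $T_N\simeq A[1]$ in $\per A$ (up to permutation of summands), whence $\ch(T_N)=\mod A[1]$ and $\ct_N=\mod A$; conversely, $\ct_N=\mod A$ together with the silting property of $T_N$ forces $T_N\simeq A[1]$.

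The main obstacle is the identification of $\dim B_t$ with the combinatorial $c$-vector: the other ingredients (silting mutation, Happel--Reiten--Smal\o{} tilts, uniqueness of brick labels of covering relations in $\tors(\mod A)$) are essentially formal once the standard silting correspondences are invoked, but matching the categorical brick produced by a silting mutation with the combinatorial $c$-vector of the framed quiver is where the Jacobi algebra structure of $A$ and the non-degeneracy of the potential $W$ really enter.
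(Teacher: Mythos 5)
The paper itself contains no proof of this theorem: it is quoted from Nagao's work, with pointers to \cite{Nagao13} and to section~7.6 of \cite{Keller12a} (and to \cite{Qiu15} in the acyclic case). Your sketch is essentially the standard categorical argument underlying those references --- green mutation realised as left mutation of intermediate silting objects (equivalently, of cluster-tilting objects in the cluster category), Happel--Reiten--Smal\o{} tilts of the associated hearts, the brick labelling of covering relations in $\tors(\mod A)$ from \cite{DemonetIyamaReadingReitenThomas17}, and the index/dimension-vector interpretation of $c$-vectors from \cite{Plamondon11a, Nagao13} --- so the route is the intended one and the outline is correct. Two points deserve more care than your write-up gives them. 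First, the fact that the silting objects $T_t$ remain \emph{two-term} (intermediate between $A$ and $A[1]$), which is what makes $\ct_t=\mod A\cap\ch(T_t)$ a torsion class and each step a covering relation, is not formal: it is the categorical form of sign-coherence, i.e.\ of the Derksen--Weyman--Zelevinsky theorem \cite{DerksenWeymanZelevinsky10}, and should be invoked explicitly rather than folded into ``greenness allows a left mutation''. Second, $\Ext^1(B_t,B_t)=0$ does not hold for an arbitrary brick label of a covering relation; here it must be deduced from the rigidity of the mutated indecomposable summand in the $2$-Calabi--Yau cluster category (or from $\tau$-rigidity of the corresponding summand of the support $\tau$-tilting module, as in \cite{AdachiIyamaReiten14, DemonetIyamaJasso19}), so your one-clause justification is the right idea but needs that descent spelled out. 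You correctly identify the matching of $\dim B_t$ with the combinatorial $c$-vector as the remaining substantive input.
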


Notice that the theorem admits an obvious
generalization to arbitrary (red and green) sequences of mutations 
(cf. section~7.6 of \cite{Keller12a}). We refer to \cite{GarverMcConville19} 
for applications of the theorem to the study of lattice properties
of oriented exchange graphs.

\begin{corollary}[Br\"ustle--Dupont--P\'erotin \cite{BruestleDupontPerotin14}]
\label{cor:finite-dimensional}
If $Q$ admits a reddening (in particular a maximal green) sequence, 
then $A$ is finite-dimensional.
\end{corollary}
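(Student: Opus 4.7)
The plan is to extract from the reddening sequence a chain of torsion classes in $\mod A$ via the preceding theorem of Nagao, and then to read off the finite-dimensionality of $A$ from the properties of this chain. More precisely, I would first apply the generalization of Nagao's theorem to reddening sequences (per the remark following the theorem) to $\bf{i}=(i_1,\ldots,i_N)$; this yields a finite collection of torsion classes $\ct_0,\ct_1,\ldots,\ct_N$ in $\mod A$ whose union is all of $\mod A$. In the maximal green case this is an ascending chain $0=\ct_0\subsetneq\ct_1\subsetneq\cdots\subsetneq\ct_N=\mod A$ whose minimal inclusions $\ct_{t-1}\subsetneq\ct_t$ are controlled by finite-dimensional rigid bricks $B_t$.

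The heart of the argument is to convert the equality $\ct_N=\mod A$ into the finite-dimensionality of $A$. The route I would pursue uses the Adachi--Iyama--Reiten correspondence between functorially finite torsion classes and support $\tau$-tilting modules: inductively along the chain, I would construct support $\tau$-tilting right $A$-modules $M_0=0, M_1,\ldots, M_N$, each finite-dimensional, with $\ct_t=\mathrm{Fac}(M_t)$, obtaining $M_t$ from $M_{t-1}$ by the single mutation dictated by the brick $B_t$. At the terminal step, $\mathrm{Fac}(M_N)=\mod A$ forces every simple $S_i$, $i\in Q_0$, to be a quotient of a power of $M_N$; hence the top of $M_N$ contains every simple. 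Standard $\tau$-tilting theory then implies that $A$ itself lies in $\add(M_N)$, whence $A$ is a direct summand of $M_N^{(r)}$ for some $r$ and is therefore finite-dimensional.

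The main obstacle I anticipate is the legitimacy of the Adachi--Iyama--Reiten formalism in the setting where $A$ is a priori only the inverse limit of its finite-dimensional quotients $A/J^n$, not a finite-dimensional algebra. Both the correspondence between torsion classes and support $\tau$-tilting modules and the identification of $A$ as a summand of any $M$ with $\mathrm{Fac}(M)=\mod A$ are classically established only for finite-dimensional algebras and must be extended with care; in particular one needs to check that the inductive construction of the $M_t$ from the bricks $B_t$ stays within $\mod A$ at each mutation step without requiring finite-dimensionality of $A$ in advance. Once this technical point is handled, the general reddening case follows by the same reasoning, since the chain of torsion classes associated to a reddening sequence still exhausts $\mod A$ even when the chain is not monotone.
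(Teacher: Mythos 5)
There is a genuine gap, and it is the one you flag yourself at the end but do not close: the step ``standard $\tau$-tilting theory then implies that $A$ itself lies in $\add(M_N)$'' is circular. The Adachi--Iyama--Reiten bijection between functorially finite torsion classes and support $\tau$-tilting modules, and in particular the identification of the torsion class $\mod A$ with the module $A$ (so that $\mathrm{Fac}(M)=\mod A$ forces $A\in\add(M)$), are theorems \emph{about finite-dimensional algebras}. Here $A$ is the completed Jacobi algebra $\varprojlim A/J^n$, which is a priori infinite-dimensional and is not even an object of $\mod A$ (which consists of the finite-dimensional, hence nilpotent, modules); so the conclusion ``$A$ is a summand of $M_N^{(r)}$'' is not a consequence of the formalism but is essentially the statement to be proved. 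Note also that even granting an $M_N$ with $\mathrm{Fac}(M_N)=\mod A$, the passage from ``every simple occurs in the top of $M_N$'' to ``$A\in\add(M_N)$'' again invokes projectivity and $\tau$-rigidity over a finite-dimensional algebra. So the ``main obstacle'' you anticipate is not a technical point to be handled with care afterwards; it is the whole content of the corollary, and your argument does not supply an independent way around it.

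The paper's proof avoids $\tau$-tilting entirely and stays inside the category $\mod A$: by Nagao's theorem (and its extension to red-and-green sequences), every object of $\mod A$ admits a finite zig-zag filtration whose subquotients are direct sums of the \emph{finitely many} bricks $B_1,\dots,B_N$. Hence the Loewy lengths of all finite-dimensional $A$-modules are uniformly bounded, say by $m$; since every finite-dimensional module is nilpotent, this gives $A/J^n=A/J^m$ for all $n\ge m$, and $A\cong\varprojlim A/J^n\cong A/J^m$ is finite-dimensional (Gabriel, Krause--Vossieck). If you want to salvage your outline, replace the $\tau$-tilting step by this filtration/Loewy-length argument; your remark that each $A/J^n$ would be a quotient of a power of $M_N$ is in the right spirit (it bounds Loewy length), but the bound must come from the bricks $B_t$, which Nagao's theorem hands you directly, rather than from a module $M_N$ whose existence you cannot yet justify.
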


\begin{proof} By the theorem, each object of
$\mod A$ admits a finite zig-zag-filtration, whose subquotients are finite direct
sums of the finitely many modules $B_t$, $1\leq t\leq N$. So in this case, there
is a uniform bound on the Loewy length of the finite-dimensional $A$-modules,
which implies that $A$ itself is finite-dimensional 
\cite{Gabriel73, KrauseVossieck17}, a fact proved in a different manner in
\cite{BruestleDupontPerotin14}. 
\end{proof}

Recall that the Hasse quiver $\Hasse(P)$ of a poset $P$ has as vertices the
elements of $P$ and an arrow $x\to y$ for each minimal inequality
$x<y$, i.e. we have $x<y$ and whenever $x\leq z\leq y$, we have
$x=z$ or $z=y$. It is immediate from the equality~(\ref{eq:min-inclusion})
that the inclusions $\ct_{t-1}\subset \ct_t$ in the chain of torsion
classes associated to a green sequence are minimal inclusions.
Thus, the chain yields a path starting at $0$ in the Hasse
quiver $\Hasse(\tors A)$ of the poset of torsion classes
$\tors A$ of $\mod A$. The following
theorem is an immediate consequence of the results of
Adachi--Iyama--Reiten \cite{AdachiIyamaReiten14} and
Demonet--Iyama--Jasso \cite{DemonetIyamaJasso19}.

\begin{theorem} \label{thm:bijection}
Suppose $\dim A<\infty$. Then Nagao's map taking a 
green sequence $\bf{i}$ to the chain of torsion classes $(\ct_t)$ is
a bijection from the set of green sequences for $Q$ to the set
of paths starting at $0$ in the Hasse quiver of torsion classes
of $\mod A$.
\end{theorem}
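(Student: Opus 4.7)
The plan is to exhibit an inverse to Nagao's map $\Phi$ sending $\mathbf{i}$ to $(\ct_0,\ldots,\ct_N)$. First note that $\Phi$ does land in paths of $\Hasse(\tors A)$: any intermediate torsion class $\ct_{t-1}\subsetneq\ct'\subsetneq\ct_t$ would split the brick $B_t$ as a direct sum via $\ct'^\perp\cap\ct_t$ and $\ct_{t-1}^\perp\cap\ct'$, contradicting $\End(B_t)=k$, so each inclusion in the chain is minimal.

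For injectivity, suppose two green sequences $\mathbf{i}=(i_1,\ldots,i_N)$ and $\mathbf{j}=(j_1,\ldots,j_N)$ produce the same chain $(\ct_t)$. Then the bricks $B_t$ of Nagao's theorem coincide step by step, and so do their dimension vectors. Proceed by induction on $t$: if $(i_1,\ldots,i_{t-1})=(j_1,\ldots,j_{t-1})$, the two sequences share the same partially mutated framed quiver $\mu_{i_{t-1}}\cdots\mu_{i_1}(\hat{Q})$, whose non-frozen $c$-vectors are pairwise distinct by sign-coherence together with the invertibility of the $C$-matrix of any seed. Hence $i_t$ is uniquely recovered from $\pm\dimv B_t$.

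For surjectivity, fix a path $0=\ct_0\to\ct_1\to\cdots\to\ct_N$ in $\Hasse(\tors A)$. By Adachi--Iyama--Reiten, functorially finite torsion classes in $\mod A$ correspond bijectively to basic support $\tau$-tilting $A$-modules, and when restricted to them the Hasse quiver of $\tors A$ matches the mutation graph of support $\tau$-tilting modules. By Demonet--Iyama--Jasso, every covering relation in $\tors A$ in fact lies in this functorially finite part, and its brick label coincides with the Nagao brick. Starting from the functorially finite class $\ct_0=0$ and applying these results inductively, one obtains a chain $A=M_0,M_1,\ldots,M_N$ of support $\tau$-tilting $A$-modules in which $M_t$ is a single left mutation of $M_{t-1}$. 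Via the categorification of Derksen--Weyman--Zelevinsky and Nagao, each such $\tau$-tilting mutation lifts to a mutation of the framed quiver $\hat{Q}$ at a single non-frozen vertex $i_t$; the vertex is \emph{green} precisely because enlarging the torsion class corresponds to a left (as opposed to right) mutation. The resulting $\mathbf{i}=(i_1,\ldots,i_N)$ is the required green sequence with $\Phi(\mathbf{i})=(\ct_t)$.

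The main obstacle is the surjectivity step, specifically checking that a Hasse path starting at $0$ does not escape the functorially finite part of $\tors A$ and that the DIJ brick on each covering relation matches Nagao's brick; both rely on combining AIR with DIJ. Once this is in hand, the identification of left $\tau$-tilting mutations with green vertex mutations of $\hat{Q}$ is essentially bookkeeping through the cluster categorification, and the two directions of the bijection fit together.
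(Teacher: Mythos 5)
Your proposal is correct and follows essentially the same route as the paper: the bijection with Hasse paths in the functorially finite part is Adachi--Iyama--Reiten (together with the categorification linking $\tau$-tilting mutation to green quiver mutation, which the paper delegates to Theorem~4.1 of \cite{AdachiIyamaReiten14} and Theorem~4.9 of \cite{BruestleYang13}), and the key point that a Hasse path starting at $0$ in $\tors A$ cannot leave $\ftors A$ is exactly Theorem~1.3 of \cite{DemonetIyamaJasso19}, as you identify. Your extra injectivity argument via distinctness of $c$-vectors and your brick-label check are fine but are already contained in the cited results.
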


\begin{proof} Recall that a torsion class $\ct$ is functorially finite if
for each module $M$, there is a morphism $M \to T$ with $T\in\ct$ such
that each morphism $M \to T'$ with $T'\in \ct$ factors through $M \to T$.
Let us write $\ftors A$ for the poset of functorially finite torsion
classes in $\mod A$. It follows from
Theorem~4.1 of \cite{AdachiIyamaReiten14}, cf. also
Theorem~4.9 of \cite{BruestleYang13}, that Nagao's map
is a bijection from the set of green sequences onto the set of
paths starting at $0$ in the quiver $\Hasse(\ftors A)$. It follows from
Theorem~1.3 of \cite{DemonetIyamaJasso19} that immediate
successors and predecessors in $\Hasse(\tors A)$ of functorially finite torsion
classes are functorially finite. Thus the inclusion 
$\Hasse(\ftors A) \subset \Hasse(\tors A)$ induces an isomorphism
of the connected component of $0$ in $\Hasse(\ftors A)$ onto the
connected component of $0$ in $\Hasse(\tors A)$. The claim follows.
\end{proof}

Though it is not necessary for the proof of Muller's theorem,
it is an interesting question to ask which sequences of bricks $(B_t)$ are
associated with maximal green sequences of $Q$. The following
remarkably simple criterion is proved for maximal green sequences
of cluster-finite quivers by Igusa
in Corollary~2.14 of \cite{Igusa17a}. A more general
statement concerning (possibly infinite) chains in the poset of torsion
classes of a finite-dimensional algebra is proved by
Demonet in Appendix A. Applications to the construction of maximal
green sequences for representation-finite cluster-tilted algebras 
are given in the appendix to \cite{Igusa17a} for type $A$ and in
\cite{NasrIsfahani18} for arbitrary type.

\begin{theorem}[Igusa \cite{Igusa17a}, Demonet App.~A]
Suppose $\dim A<\infty$. A sequence of
bricks $B_1, \ldots, B_N$ is associated with a maximal green sequence for $Q$
if and only if
\[
\Hom(B_i, B_j)=0 \mbox{ for all } i<j
\]
and the sequence cannot be refined keeping this condition.
\end{theorem}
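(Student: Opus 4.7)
The plan is to reduce the theorem to a statement about chains of torsion classes in $\mod A$ via Theorem~\ref{thm:bijection}, and then establish a bijection between forward $\Hom$-orthogonal sequences of bricks and paths from $0$ in $\Hasse(\tors A)$. By Theorem~\ref{thm:bijection} and Nagao's theorem, green sequences correspond bijectively to paths from $0$ in $\Hasse(\tors A)$, with the brick $B_t$ of the $t$-th green mutation equal to the label of the edge $\ct_{t-1} \subsetneq \ct_t$ under the brick labeling of \cite{DemonetIyamaReadingReitenThomas17}: $\ct^\perp \cap \ct' = \add(B)$ for a unique brick $B$. A green sequence is maximal iff the associated path terminates at $\mod A$. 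So the theorem reduces to identifying the label sequences of maximal chains from $0$ to $\mod A$ with the maximal forward $\Hom$-orthogonal sequences of bricks.

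For one direction of the bijection, let $0 = \ct_0 \subsetneq \cdots \subsetneq \ct_N$ be a chain with edge labels $B_t$. For $i < j$ we have $B_i \in \ct_i \subseteq \ct_{j-1}$ and $B_j \in \ct_{j-1}^\perp$, so $\Hom(B_i, B_j) = 0$. For the converse, given a forward $\Hom$-orthogonal sequence $(B_1, \ldots, B_N)$, I would set $\ct_t$ to be the smallest torsion class containing $B_1, \ldots, B_t$. The key lemma is that for any generating set $\cs$ of a torsion class $\ct$, we have $\ct^\perp = \{X : \Hom(S, X) = 0\ \forall\, S \in \cs\}$, because $\Hom$-vanishing from $\cs$ propagates through quotients and extensions via the long exact sequence in $\Hom$. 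Forward $\Hom$-orthogonality then yields $B_t \in \ct_{t-1}^\perp \cap \ct_t$ and $\ct_{t-1} \subsetneq \ct_t$, which by the brick labeling is a covering relation with label exactly $B_t$.

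With this bijection between paths from $0$ and forward $\Hom$-orthogonal brick sequences in hand, the theorem reduces to matching the maximality conditions on the two sides. If $\ct_N \neq \mod A$, then the nonzero torsionfree class $\ct_N^\perp$ contains a minimal-length object, which is simple (by closure of torsionfree classes under subobjects) and hence a brick that can be appended to the sequence, refining it. Conversely, if $\ct_N = \mod A$ and the chain consists of coverings, then appending at the end is impossible since $\ct_N^\perp = 0$, and middle insertion of a brick $C$ is excluded because it would produce, via the converse direction of the bijection, an alternative chain from $0$ to $\mod A$ of length $N+1$, contradicting the Jordan--H\"older type property of maximal chains in $\tors A$.

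The main obstacle is the backward direction of the bijection: verifying that the inclusion $\ct_{t-1} \subsetneq \ct_t$ built from the sequence is actually a covering relation in $\tors A$ with $\ct_{t-1}^\perp \cap \ct_t = \add(B_t)$ on the nose, rather than merely a proper inclusion containing $B_t$ as a summand. This requires the full strength of the brick-labeling bijection of \cite{DemonetIyamaReadingReitenThomas17}, combined with the functorial-finiteness results of \cite{DemonetIyamaJasso19} needed to align the constructed chain with the connected component of $0$ in $\Hasse(\ftors A)$ used by Nagao's map.
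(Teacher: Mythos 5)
There is a genuine gap, and it sits exactly where you locate your ``main obstacle'': the intermediate claim that forward $\Hom$-orthogonal brick sequences biject with paths starting at $0$ in $\Hasse(\tors A)$ is false. Your construction $\ct_t=\T(B_1,\ldots,B_t)$ together with your (correct) key lemma only gives $B_t\in\ct_{t-1}^\perp\cap\ct_t$ and a \emph{strictly increasing} chain; it does not make $\ct_{t-1}\subset\ct_t$ a covering relation, and no citation of \cite{DemonetIyamaReadingReitenThomas17} or \cite{DemonetIyamaJasso19} will repair this, because the statement is simply not true for an arbitrary forward $\Hom$-orthogonal sequence. Concretely, for $A=k(1\to 2)$ the one-term sequence consisting of the projective brick $P_1$ is (vacuously) forward $\Hom$-orthogonal, yet $0\subset\T(P_1)$ is not a covering: $\add S_1$ lies strictly in between, and $0^\perp\cap\T(P_1)$ contains the two bricks $S_1$ and $P_1$. (Consistently with the theorem, this sequence refines to $(S_1,P_1)$.) It is only the \emph{maximality} of the brick sequence that forces each step to be a covering, so the strategy ``establish the bijection first, then match the maximality conditions'' cannot work as stated. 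Your fallback for excluding middle insertions is also broken: you appeal to ``the Jordan--H\"older type property of maximal chains in $\tors A$,'' but $\tors A$ is not graded and maximal chains of different lengths coexist --- already for $\vec{A}_2$ there are maximal green sequences of lengths $2$ and $3$ (Figure~\ref{fig2}).

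The paper's Appendix A avoids this trap by proving a correspondence only between \emph{maximal} chains on the two sides. Injectivity and order-preservation of the map $S_\bt\mapsto\ct_\bt$ (Proposition~\ref{prop1}) already show that a maximal green sequence yields an unrefinable brick sequence: a refinement would map to a strictly larger chain of torsion classes. The hard direction is surjectivity onto maximal chains, and the key input is Proposition~\ref{prop2}: for a maximal chain $C$ of torsion classes, every $\ct\in C$ is the join of the classes $\T(S_q)$ over the brick labels $S_q$ of the arrows of $\Hasse(C)$ below $\ct$. Its proof picks a brick $S$ of \emph{minimal dimension} in $\ct\cap\cv'^\perp$, uses Lemma~3.10 of \cite{DemonetIyamaReadingReitenThomas17} (filtrations by bricks) and the stability of $C$ under meets to produce an arrow of $\Hasse(C)$ labelled by $S$, reaching a contradiction. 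Some argument of this kind --- exploiting maximality of the chain rather than a purported unrestricted bijection --- is what your proposal is missing.
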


To conclude the proof of Muller's theorem, we need one more
recent result on torsion classes. By definition, if $R$ and $R'$ are
quivers, a {\em contraction} $R \to R'$ is a functor from
the path category of $R$ to that of $R'$ which maps each
arrow to an arrow or an identity morphism.

\begin{theorem}[Demonet--Iyama--Reading--Reiten--Thomas 
\cite{DemonetIyamaReadingReitenThomas17}]
Let $A$ be a finite-dimensional algebra and $I\triangleleft A$ a $2$-sided ideal.
Then the map $\ct \mapsto \ct \cap \mod(A/I)$
induces a contraction
\[
\Hasse(\tors A) \to \Hasse(\tors A/I).
\]
\end{theorem}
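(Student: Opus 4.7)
The plan is to show that the map $\pi : \tors A \to \tors(A/I)$, $\ct \mapsto \ct \cap \mod(A/I)$, is a well-defined order-preserving map sending every cover $\ct \lessdot \ct'$ in $\tors A$ either to a cover in $\tors(A/I)$ or to an equality. Well-definedness is immediate: the full subcategory $\mod(A/I) \subseteq \mod A$ is closed under subobjects, quotients and extensions, so $\ct \cap \mod(A/I)$ inherits closure under quotients and extensions inside $\mod(A/I)$, and is thus a torsion class of $\mod(A/I)$; monotonicity is evident.

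The main input is the brick labelling of the whole Hasse quiver $\Hasse(\tors A)$ (known by work of Demonet--Iyama--Jasso, generalizing the statement for green sequences given by Nagao's theorem above): for every cover $\ct \lessdot \ct'$ there exists a brick $B$ with $\ct^\perp \cap \ct' = \add(B) = \{B^n : n \geq 0\}$ (the last equality using that $B$ is indecomposable), and the torsion decomposition of any $M \in \ct'$ relative to $(\ct, \ct^\perp)$ takes the form $0 \to T \to M \to B^n \to 0$ with $T \in \ct$. The argument then splits according to whether $I$ annihilates $B$.

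If $IB \neq 0$, then $IB^n \neq 0$ for all $n \geq 1$, so $\add(B) \cap \mod(A/I) = \{0\}$. For $M \in \pi(\ct')$, the cokernel $B^n$ of the torsion decomposition lies in $\mod(A/I)$ as a quotient of $M$, hence vanishes, yielding $M = T \in \ct$; consequently $\pi(\ct) = \pi(\ct')$. If on the contrary $IB = 0$, then $B \in \pi(\ct') \setminus \pi(\ct)$ (since $B \in \ct^\perp$ forbids $B \in \ct$), so $\pi(\ct) \subsetneq \pi(\ct')$. To show this inclusion is itself a cover, let $\cs \in \tors(A/I)$ satisfy $\pi(\ct) \subseteq \cs \subseteq \pi(\ct')$ and set $\tilde{\cs} := \ct \vee \cs$ in the complete lattice $\tors A$. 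Then $\ct \subseteq \tilde{\cs} \subseteq \ct'$, so by the cover property $\tilde{\cs} \in \{\ct, \ct'\}$. If $\tilde{\cs} = \ct$ then $\cs \subseteq \ct$ and hence $\cs = \pi(\ct)$. Otherwise, assuming $\cs \neq \pi(\ct)$, pick $N \in \cs$ with $N \notin \ct$; its torsion-free quotient $B^m$ is nonzero, lies in $\mod(A/I)$ and belongs to $\cs$ by quotient-closure of $\cs$, so $B \in \cs$ as a direct summand. Then for any $M \in \pi(\ct')$, the decomposition $0 \to T \to M \to B^n \to 0$ has $T \in \pi(\ct) \subseteq \cs$ (as a submodule of $M \in \mod(A/I)$) and $B^n \in \cs$, whence $M \in \cs$; so $\cs = \pi(\ct')$.

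The main obstacle is exactly the external input: the brick labelling of arbitrary covers in $\Hasse(\tors A)$ (not only those arising from green sequences) is a non-trivial theorem, but granting it the case analysis above is essentially formal torsion-theoretic bookkeeping driven by the single observation $\add(B) \cap \mod(A/I) \in \{\{0\}, \add(B)\}$ for a brick $B$.
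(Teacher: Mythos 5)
Your strategy is sound and genuinely different from the paper's, but it rests on a misstatement of the brick-labelling theorem that must be corrected. For a minimal inclusion (cover) $\ct\subset\ct'$ in $\tors A$ it is \emph{not} true in general that $\ct^\perp\cap\ct'=\add(B)$: the correct statement (Theorem~3.3 and Lemma~3.10 of \cite{DemonetIyamaReadingReitenThomas17}, cf.\ also \cite{BarnardCarrollZhu17}) is that $\ct^\perp\cap\ct'$ contains a unique brick $B$ and that every object of $\ct^\perp\cap\ct'$ admits a filtration with all subquotients isomorphic to $B$, i.e.\ $\ct^\perp\cap\ct'=\operatorname{Filt}(B)$. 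These differ whenever $\Ext^1(B,B)\neq 0$: for $\Lambda=k[x]/(x^2)$ the unique cover $0\subset\mod\Lambda$ has label $B=k$, yet $0^\perp\cap\mod\Lambda=\mod\Lambda$ contains $\Lambda\notin\add(k)$. (The equality $\add(B)=\{B^n\}$ is the harmless half; also note that $\mod(A/I)$ is \emph{not} closed under extensions inside $\mod A$, although this does not affect the well-definedness of $\pi$, which only needs closure under quotients and the fact that extensions computed in $\mod(A/I)$ are extensions in $\mod A$.) Fortunately every use you make of the false equality survives the correction: the torsion-free quotient $M/T$ is $B$-filtered rather than of the form $B^n$; when $IB\neq 0$ one still gets $\operatorname{Filt}(B)\cap\mod(A/I)=0$, because a nonzero $B$-filtered module contains $B$ as a submodule; and in the final step $B$ is obtained as a \emph{quotient} of the torsion-free quotient of $N$ (the top subquotient of its filtration) rather than a direct summand, after which $M/T\in\cs$ because $\cs$ is closed under extensions and $M/T$ is filtered by $B\in\cs$. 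With these repairs the case analysis is correct; the detour through the join $\ct\vee\cs$ is not actually needed in the second branch.

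For comparison, the paper's proof is considerably shorter and avoids the dichotomy on $IB$ altogether. It uses the numerical form of the labelling theorem -- an inclusion $\cs\subseteq\ct$ of torsion classes is an equality iff $\cs^\perp\cap\ct$ contains no brick and is a cover iff it contains exactly one -- applied both in $\mod A$ and in $\mod(A/I)$, and reduces everything to the single observation that $\pi(\cs)^\perp\cap\pi(\ct)\subseteq\cs^\perp\cap\ct$ (the $\cs$-torsion submodule of a module killed by $I$ is again killed by $I$, hence vanishes if the module is right orthogonal to $\pi(\cs)$). Your longer route buys something in return: it makes explicit that an arrow is contracted precisely when $IB\neq 0$, which is the content of Remark~\ref{rk:bricks}, whereas the paper obtains that only as a by-product.
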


\begin{proof} Let us first recall from Theorem~3.3 of 
\cite{DemonetIyamaReadingReitenThomas17},
\cf also Theorem~1.0.2 of \cite{BarnardCarrollZhu17}, that an inclusion of
torsion classes $\cs\subseteq\ct$ is minimal if and only if
there is a unique (up to isomorphism) brick in $\cs^\perp\cap\ct$ and
that it is an equality if and only if there is no brick in $\cs^\perp\cap\ct$.
Now let $\cs\subset\ct$ be a minimal inclusion of torsion classes
in $\mod A$. Let $\cs_I=\cs\cap\mod(A/I)$ and $\ct_I=\ct\cap\mod(A/I)$.
Let $M$ be a module in $\cs_I^\perp\cap\ct_I$. Let $M_\cs\subseteq M$
be its maximal submodule in $\cs$. Then $M_\cs$ clearly belongs
to $\cs_I$. Since $M$ is right orthogonal to $\cs_I$, we have $M_\cs=0$.
Thus, the module $M$ belongs to $\cs^\perp\cap\ct$ and we have
$\cs_I^\perp\cap\ct_I \subset \cs^\perp\cap\ct$. Thus, the subcategory
$\cs_I^\perp\cap\ct_I$ contains either zero or one brick and
the inclusion $\cs_I\subseteq\cs_I$ is either an equality or minimal.
\end{proof}

\begin{remark} \label{rk:bricks}
As explained in section~3.2 of 
\cite{DemonetIyamaReadingReitenThomas17}, a brick $B$ is associated
with each minimal inclusion of torsion classes $\cs\subset\ct$ of $\mod A$
and the proof shows that such an inclusion is mapped to
a minimal inclusion in $\mod(A/I)$ if and only if $B$ belongs to $\mod(A/I)$
(i.e. $B$ is annihilated by $I$). 
\end{remark}

We can now conclude: Suppose that $Q$ has a maximal green sequence
and that $Q'\subseteq Q$ is a full subquiver. Let $e$ be the sum
of the lazy idempotents $e_i$ associated with the vertices not in $Q'_0$.
Clearly, the algebra $\hat{kQ'}$ is isomorphic to the quotient of
$\hat{kQ}$ by the two-sided ideal generated by $e$.
Let $W' \in \hat{kQ'}$ be the image of $W$ under the projection
$\hat{kQ} \to \hat{kQ'}$. By Corollary~22 of \cite{Labardini09a}
(the published version!), the potential $W'$ is non degenerate on $Q'$. 
Let $A'$ be the Jacobi algebra of $(Q',W')$. Clearly, the algebra $A'$
is isomorphic to the quotient of $A$ by the two-sided ideal generated by $e$. 
By the theorem above, the map
\[
\Hasse(\tors A) \to \Hasse(\tors A') \ko \ct \mapsto \ct\cap\mod A'
\]
is a contraction and clearly, it takes $0$ to $0$ and $\mod A$ to
$\mod A'$. Thus, it maps a finite path from $0$ to $\mod A$ to
a finite path from $0$ to $\mod A'$. By Theorem~\ref{thm:bijection},
we obtain a maximal green sequence for $Q'$.

\begin{remark} By comparing remarks~\ref{rk:support} and \ref{rk:bricks} we
see that both proofs yield the same explicit recipe for constructing
the induced maximal green sequence.
\end{remark}

\section{Comparing the proofs}
\label{s:comparing}

Let us emphasize that the statement of Muller's theorem~\ref{thm:Muller}
and the proof via scattering diagrams
given by him in \cite{Muller16} go through for valued
quivers. In contrast, our representation-theoretic proof only works
for the classes of valued quivers treated by Demonet
\cite{Demonet10, Demonet11}.

When trying to compare the two proofs we are naturally lead to the
problem of relating torsion classes to scattering diagrams. A first step
towards its solution was taken by Bridgeland \cite{Bridgeland17}
who associates a scattering diagram (with values in a motivic Hall
algebra) to each finite quiver with (polynomial) relations and 
without loops or $2$-cycles. 
By a result of Hua--Song \cite{HuaSong18}, each potential on a quiver
$Q$ whose (complete) Jacobi-algebra is finite-dimensional 
is right-equivalent to a polynomial potential $W$.
So if the morphism from the non complete Jacobi algebra of $(Q,W)$
to the complete Jacobi algebra is bijective 
(which is not automatic even if the non complete
Jacobi algebra is finite-dimensional!), then
Bridgeland's construction applies to this case. In
particular, it often applies when the quiver admits a reddening sequence.
The problem of comparing Bridgeland's stability scattering diagram
with the cluster scattering diagram of 
Gross--Hacking--Keel--Kontsevich 
\cite{GrossHackingKeelKontsevich18} is the subject of
ongoing research \cite{CheungMandel19, Qin19}.
Other topics closely related to the representation-theoretic proof
are the investigation of chains of torsion classes induced by
stability conditions \cite{Igusa17a, Igusa17,BruestleSmithTreffinger17,
ApruzzeseIgusa18, BruestleSmithTreffinger18, Treffinger18},
the wall and chamber structure of the space of stability conditions
\cite{BruestleSmithTreffinger18a} and the study of
semi-invariants and picture groups \cite{IgusaOrrTodorovWeyman09,
IgusaOrrTodorovWeyman15, IgusaTodorovWeyman16}.

\appendix
\section{Maximal chains of torsion classes, by Laurent Demonet}

Let $A$ be a finite-dimensional algebra over a field $k$. We consider
the category $\mod A$ of  finite-dimensional right $A$-modules and its
lattice $\tors(A)=\tors(\mod A)$ of torsion classes (cf. section~\ref{s:proof}).
It is a {\em complete} lattice, \ie the associated category has all limits,
called meets, and all colimits, called joins. In particular, for each set
of modules $\cx\subseteq \mod A$, there is a smallest torsion class $\T(\cx)$ containing
$\cx$. An element $x$ of a complete poset is {\em completely join irreducible} if
it is not the join of an arbitrary family of elements $<x$. For a poset
$P$, we denote by $\Hasse(P)$ the {\em Hasse quiver} of $P$, whose
vertices are the elements of $P$ and which has an arrow $x \to y$ if
$x<y$ and $x=z$ or $z=y$ whenever $x\leq z\leq y$ (it is opposite
to the Hasse quiver of \cite{DemonetIyamaReadingReitenThomas17}).
If $P$ is complete, we denote by $\cjirr(P)$ the subposet of completely
join irreducible elements.

A {\em brick} is an $A$-module whose endomorphisms form a division algebra.
We write $\brick(A)$ for the set of isomorphism classes of bricks of $\mod A$.

A {\em chain of torsion classes} is a totally ordered subposet 
of $\tors(A)$. Chains of torsion classes are ordered by inclusion.
Using the results of \cite{DemonetIyamaReadingReitenThomas17}
we will describe the (possibly infinite!) maximal chains of torsion classes in
terms of the bricks of $\mod A$. Notice that part of the results
of \cite{DemonetIyamaReadingReitenThomas17} were independently
obtained in \cite{BarnardCarrollZhu17}.

Let $I$ be a totally ordered set. An {\em $I$-chain of bricks} is a map
\[
S_\bt: I \to \brick(A)\ko i \mapsto S_i
\]
such that $\Hom(S_i, S_j)=0$ for $i<j$. In particular, the map $S_\bt$
induces an injection from $I$ into the set of isomorphism classes of
bricks. If $I^1$ and $I^2$ are totally
ordered sets and $S_\bt^1$, $S_\bt^2$ are chains of bricks indexed
by $I^1$ and $I^2$, we write $S_\bt^1 \leq S_\bt^2$ if there is an increasing
inclusion $\iota: I^1 \to I^2$ such that for all $i\in I^1$, the module
$S^1_i$ is isomorphic to $S^2_{\iota(i)}$. Suppose we have an inequality
$S^1_\bt \leq S^2_\bt$ given by $\iota$ and an inequality 
$S^2_\bt \leq S^1_\bt$ given by $\kappa$.
Then for each $i\in I^1$, the module $S_i$ is isomorphic to $S_{\kappa(\iota(i))}$
so that $\kappa\iota(i)=i$ and $\iota$ and $\kappa$ are bijective. 
We define the chains of bricks $S_\bt^1$ and $S_\bt^2$ to be {\em equivalent}
if we have $S^1_\bt\leq S^2_\bt \leq S^1_\bt$.

Let $I$ be a totally ordered
set and $S_\bt$ an $I$-chain of bricks. We will associate with $S_\bt$
a chain of torsion classes $\ct_\bt=\Phi(S_\bt)$.
An {\em ideal} of $I$ is a subset
$j\subseteq I$ such that $l\in j$ whenever $l\leq i$ and $i\in j$.
Let $J$ be the poset of ideals of $I$. For each $j\in J$, let
$\ct_j$ be the smallest torsion class containing the $S_i$, $i\in j$. It is
the join of the torsion classes $\T(S_i)$, $i\in j$.
Clearly, the map $j \mapsto \ct_j$ is increasing. If we have $j\subset j'$ and
$i$ belongs to $j'$ but not to $j$, then the brick $S_i$ lies in $\ct_{j'}\cap \ct_j^\perp$
so that the map $j \mapsto \ct_j$ is strictly increasing. We define $\Phi(S_\bt)$
to be the subposet of $\tors(A)$ formed by the $\ct_j$, $j\in J$.

\begin{proposition} \label{prop1} The map $\Phi$ induces an injective morphism 
from the poset of equivalence classes of chains of bricks to the
poset of chains of torsion classes.
\end{proposition}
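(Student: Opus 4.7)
The plan is to verify (i) $\Phi$ factors through the equivalence relation, (ii) the induced map is order-preserving, and (iii) it is injective.

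For (i), if $S^1_\bt$ and $S^2_\bt$ are equivalent, then the mutual inclusions are forced to be inverse order-isomorphisms matching the bricks (as observed just before the proposition), the ideals correspond bijectively, and the torsion classes $\ct_j$ agree; hence $\Phi(S^1_\bt) = \Phi(S^2_\bt)$ in $\tors(A)$. For (ii), given $S^1_\bt \leq S^2_\bt$ via an increasing inclusion $\iota: I^1 \hookrightarrow I^2$, I would define $\iota_\ast: J^1 \to J^2$ sending each ideal $j$ to the smallest ideal of $I^2$ containing $\iota(j)$; this is order-preserving and injective, and by construction $\ct^2_{\iota_\ast(j)} \supseteq \ct^1_j$, producing the required order-preserving embedding of chains.

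For injectivity, suppose $\Phi(S^1_\bt) = \Phi(S^2_\bt)$. Since $j \mapsto \ct_j$ is a strictly increasing bijection from $J^k$ onto $\Phi(S^k_\bt)$, this equality induces an isomorphism $J^1 \cong J^2$ of totally ordered posets of ideals, from which I would extract a bijection $\phi: I^1 \cong I^2$ by matching the covering relations $(I_{<i}, I_{\leq i})$ in the respective posets. For each $i \in I^1$, writing $\ct_<$ and $\ct_\leq$ for the torsion classes in $\Phi$ corresponding to $I_{<i}$ and $I_{\leq i}$, both bricks $S^1_i$ and $S^2_{\phi(i)}$ belong to $\ct_<^\perp \cap \ct_\leq$ and each generates $\ct_\leq$ over $\ct_<$ in the sense that $\T(\ct_<, S^1_i) = \ct_\leq = \T(\ct_<, S^2_{\phi(i)})$. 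The argument then reduces to showing that these two conditions determine the brick up to isomorphism, which forces $S^1_i \cong S^2_{\phi(i)}$.

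The hard step is precisely this last uniqueness. The cover $\ct_<\subsetneq \ct_\leq$ appearing in $\Phi(S_\bt)$ need not be a covering relation in $\tors(A)$, so one cannot invoke Theorem~3.3 of \cite{DemonetIyamaReadingReitenThomas17} directly; in general, $\ct_<^\perp \cap \ct_\leq$ may contain several pairwise non-isomorphic bricks. What singles out $S_i$ is the generation requirement $\T(\ct_<, B) = \ct_\leq$: if a brick $B \in \ct_<^\perp \cap \ct_\leq$ satisfies it, then $B$ together with $\ct_<$ must produce, via iterated quotients and extensions, every module in $\ct_\leq$, and in particular every other brick of $\ct_<^\perp \cap \ct_\leq$. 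An analysis of the torsion pair $(\ct_<, \ct_<^\perp)$ restricted to $\ct_\leq$, combined with the orthogonality $B \in \ct_<^\perp$, forces $B$ to be isomorphic to $S_i$, completing the reconstruction of the chain up to equivalence.
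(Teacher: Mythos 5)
Your overall architecture coincides with the paper's: recover the index set $I$ from the poset of ideals $J$ (you do it via the covering pairs $(I_{<i},I_{\le i})$, the paper via the completely join irreducible elements of $J$ --- these are the same elements, so this part is fine and correct), reduce to two chains of bricks indexed by the same $I$ inducing the same torsion classes, and then show that for each $i$ the two bricks sitting over the cover $\ct_< \subsetneq \ct_\le$ must be isomorphic. You also correctly observe that this cover need not be a covering relation in $\tors(A)$, so the brick in $\ct_<^\perp\cap\ct_\le$ need not be unique and the generation condition $\ct_< \join \T(B)=\ct_\le$ is what must single it out.

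The genuine gap is that you do not prove this last uniqueness, and your sketch of it does not work as stated. Knowing that $B$ together with $\ct_<$ generates, via iterated extensions and quotients, every module of $\ct_\le$ --- in particular every other brick $B'$ of $\ct_<^\perp\cap\ct_\le$ --- does not force $B'\cong B$: being built from $B$ and $\ct_<$ by extensions and quotients is a far weaker condition than isomorphism, and both $B$ and $B'$ could a priori generate the same torsion class over $\ct_<$ without being isomorphic. What is actually needed is part a) of Theorem~3.4 of \cite{DemonetIyamaReadingReitenThomas17}: for a brick $B\in\cu^\perp$, the torsion class $\cu\join\T(B)$ is completely join irreducible in the interval $[\cu,\mod A]$, and the brick labelling the unique Hasse arrow ending at it (in the sense of Definition~3.5 of loc.~cit.) is $B$ itself. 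Since the label of that arrow is intrinsic to the pair of torsion classes, $\cu\join\T(S_i)=\cu\join\T(S'_i)$ immediately yields $S_i\cong S'_i$. This theorem is a nontrivial structural result about the lattice of torsion classes, not something that follows from ``an analysis of the torsion pair $(\ct_<,\ct_<^\perp)$'' in a few lines; without invoking it (or reproving it), your argument stops exactly at the step you yourself identify as the hard one.
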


\begin{proof} Clearly, the map $\Phi$ is a morphism of posets. Let
$S_\bt: I \to \brick(A)$ and $S'_\bt: I'\to \brick(A)$ be chains of bricks
and $\ct_\bt=\Phi(S_\bt)$ and $\ct'_\bt=\Phi(S'_\bt)$ the associated 
chains of torsion classes. Let $J$ and $J'$ be the posets of ideals
of $I$ and $I'$. We first construct an isomorphism
of posets $\pi: I \iso I'$ such that for each $j\in J$, we have
$\ct'_{\pi(j)}=\ct_j$. For this, 
notice that $J$ and $\tors(A)$ are complete lattices and
that the injective morphism $j\mapsto \ct_j$ commutes with arbitrary joins.
We can recover $I$ from the set of ideals $J$ of $I$
as the subposet of the completely join irreducible
elements. Since we have isomorphisms of posets
\[
J \iso \{ \ct_j\;|\; j\in J\} = \{\ct'_{j'} \;|\; j'\in J'\} \liso J'
\]
we obtain an isomorphism $\pi: I \iso I'$ and it clearly satisfies
$\ct_j= \ct'_{\pi{j}}$ for all $j\in J$.
Thus, we may assume that $S_\bt$ and $S'_\bt$ are maps
$I \to \brick(A)$ such that for each ideal $j$ of $I$, the join 
$\ct_j$ of the classes $\T(S_i)$, $i\in j$, coincides with $\ct'_j$.
Fix an element $i$ of $I$. Let $j\subseteq I$ be the ideal of
the elements $i'<i$. Let $\cu=\ct_{j}=\ct'_{j}$. Let
$\ct$ be the torsion class $\cu\join \T(S_{i})=\cu \join \T(S'_{i})$.
Since $S_{i}$ belongs to $\cu^\perp$, by part a) of Theorem~3.4
of \cite{DemonetIyamaReadingReitenThomas17}, the torsion class
$\ct$ is completely join irreducible and the unique arrow of the
Hasse quiver $\Hasse [\cu,\mod A]$ ending at $\ct$ is labelled by $S_{i}$,
cf. Definition~3.5 of \cite{DemonetIyamaReadingReitenThomas17}.
Since we also have $\ct=\cu\join \T(S'_{i})$, the arrow is also
labeled by $S'_{i}$, which is therefore isomorphic to $S_{i}$.
\end{proof}

\begin{proposition} \label{prop2} Let $C$ be a maximal chain of torsion classes.
Then each $\ct\in C$ equals the join $\cv$ of the $\cu \in \cjirr(C)$ contained in $\ct$.
It also equals the join $\cv'$ of the classes $\T(S_q)$, where $q$ runs through the
arrows of the Hasse quiver of $C \cap [0,\ct]$.
\end{proposition}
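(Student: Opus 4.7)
The plan is to exploit maximality of $C$ in the complete lattice $\tors(A)$ through two observations. First, $C$ is closed under arbitrary joins and meets: if a subfamily of $C$ had a join $\cw$ not in $C$, then $\cw$ would be comparable with every element of $C$ (above every element of the subfamily and below every common upper bound in $C$, using that $C$ is totally ordered), so $C \cup \{\cw\}$ would contradict maximality; meets are dual. Second, if $\cu < \cu'$ in $C$ and no element of $C$ lies strictly between them, then $\cu \subset \cu'$ is already a minimal inclusion in $\tors(A)$, because any intermediate torsion class could be added to $C$ without destroying the total order.

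With these two facts and the brick labelling of minimal inclusions of torsion classes from Theorem~3.3 of \cite{DemonetIyamaReadingReitenThomas17}, I would first establish the second equality $\ct = \cv'$. The inequality $\cv' \leq \ct$ is clear since each $\T(S_q)$ is contained in the target of $q$, which lies in $[0,\ct]$. Set
\[
\cv_0 = \bigvee \{\cu \in C : \cu \leq \ct \mbox{ and } \cu \leq \cv'\},
\]
which lies in $C$ by the first observation, and suppose for contradiction that $\cv_0 < \ct$. Then $\cw_0 := \bigwedge \{\cu \in C : \cv_0 < \cu \leq \ct\}$ is an element of $C$ satisfying $\cv_0 < \cw_0 \leq \ct$ with no element of $C$ strictly in between. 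The second observation makes $\cv_0 \subset \cw_0$ a minimal inclusion in $\tors(A)$, hence a Hasse arrow $q$ of $C \cap [0,\ct]$; its brick label $S_q$ satisfies $\T(S_q) \leq \cv'$, so $\cw_0 = \cv_0 \vee \T(S_q) \leq \cv'$, forcing $\cw_0 \leq \cv_0$, a contradiction. Hence $\cv_0 = \ct$ and $\ct \leq \cv'$.

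For the first equality $\ct = \cv$, I would observe that because $C$ is totally ordered, an element $\cu' \in C$ is completely join irreducible in $C$ if and only if the set $\{\cu \in C : \cu < \cu'\}$ has a maximum, equivalently $\cu'$ is the target of a Hasse arrow of $C$. The source of such an arrow then gives a minimal inclusion $\cu \subset \cu'$ in $\tors(A)$ whose brick label $S_q$ satisfies $\T(S_q) \leq \cu'$. This shows $\cv' \leq \cv \leq \ct$, and combined with the previous paragraph one obtains $\ct = \cv = \cv'$.

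The main obstacle is handling chains $C$ that may be transfinite or dense without resorting to a transfinite induction; the crucial device is the closure of a maximal chain under arbitrary joins and meets in $\tors(A)$, which realises any hypothetical gap below $\ct$ as an honest cover $\cv_0 \subset \cw_0$ inside $C$ to which the brick-labelling theorem applies directly.
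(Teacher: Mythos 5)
Your two preliminary observations are correct and are exactly how the paper exploits maximality ($C$ is closed under arbitrary joins and meets of $\tors(A)$, and a covering relation inside $C$ is already a covering relation in $\tors(A)$, hence a labelled Hasse arrow); your derivation of the first equality $\ct=\cv$ from the second is also fine. The gap is the unjustified assertion that $\cw_0=\bigwedge\{\cu\in C: \cv_0<\cu\leq\ct\}$ satisfies $\cv_0<\cw_0$. This amounts to claiming that $\cv_0$ has an immediate successor in $C\cap(\cv_0,\ct]$, and neither maximality nor closure under meets gives that: the meet of all elements of $C$ strictly above $\cv_0$ can equal $\cv_0$ itself. This really happens in $\tors(A)$. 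For the Kronecker algebra the torsion classes $\T(P_0)\supsetneq\T(P_1)\supsetneq\cdots$ generated by the successive preprojectives form a strictly decreasing chain; any maximal chain $C$ containing them also contains $\bigcap_n \T(P_n)$, and this element has no immediate successor in $C$ (a minimum of $\{\cu\in C:\cu>\bigcap_n\T(P_n)\}$ would be $\leq\T(P_n)$ for every $n$, hence $\leq\bigcap_n\T(P_n)$). So the step you need fails for general elements of maximal chains, and you give no reason why it should hold for the particular element $\cv_0$; your argument collapses precisely in the case it is meant to handle, namely when new torsion must be produced above $\cv_0$.

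This missing case is where the representation-theoretic content of the proposition lives, and the paper's proof is organised to avoid ever having to produce a ``first element above'' a given one. Assuming $\cv'\subsetneq\ct$, it uses Lemma~3.10 of \cite{DemonetIyamaReadingReitenThomas17} to find a brick $S$ of minimal dimension in $\ct\cap\cv'^\perp$, takes $\cw$ to be the meet of the elements of $C$ containing $\cv'$ and $S$, and shows by analysing images of morphisms $X\to S$ that every $\cv''\in C$ with $\cv'\subseteq\cv''\subsetneq\cw$ lies in the explicit torsion class ${}^\perp S\cap\cw$. Maximality then forces ${}^\perp S\cap\cw\in C$, and Theorem~3.4 of \cite{DemonetIyamaReadingReitenThomas17} yields the cover ${}^\perp S\cap\cw\subset\cw$ with label $S$, contradicting $S\in\cv'^\perp$. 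The key difference is that the penultimate element below $\cw$ can be written down explicitly as ${}^\perp S\cap\cw$, whereas there is no formula for, and in general no existence of, a smallest element of $C$ strictly above $\cv_0$. To repair your argument you would essentially have to import this construction.
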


\begin{proof} 
Clearly, we have $\cv\subseteq \ct$. Let
us show that $\cv'$ is contained in $\cv$. If $q: \cu' \to \cu$ is an arrow
of the Hasse quiver of $C\cap [0,\ct]$, it is also an arrow of $\Hasse(A)$,
by the maximality of $C$. Thus, the arrow has a well-defined brick label $S_q$.
Moreover, the brick $S_q$ belongs to $\cu$ and $\cu$ is completely join
irreducible. Therefore, we have $\T(S_q)\subset \cv$ and $\cv'\subset\cv$.
Let us show that the inclusion $\cv'\subseteq\ct$ is an equality.
By Lemma~3.10 of \cite{DemonetIyamaReadingReitenThomas17}, the
modules in $\ct\cap\cv'^\perp$ are those admitting a filtration whose subquotients
are bricks in $\ct\cap\cv'^\perp$. Let $S$ be a brick of minimal dimension in
$\ct\cap\cv'^\perp$. Consider the meet $\cw$ of the torsion classes $\cw'\in C$
containing $\cv'$ and $S$. Since $C$ is a maximal chain, it is stable under
meets and thus contains $\cw$. Let $\cv''\in C$ be a torsion class such that
$\cv'\subseteq \cv''\subseteq \cw$ and $\cv''\neq \cw$. Let $X\in \cv''$.
Consider a morphism $f: X \to S$. The image $\im(f)$ is a quotient of $X$
and thus belongs to $\ct$. It is also a submodule of $S$ and thus belongs to
$\cv'^\perp$. Thus, it belongs to $\cd\cap\cv'^\perp$ and has a filtration whose
subquotients are bricks in $\ct\cap\cv'^\perp$. By the minimality of the dimension
of $S$, we have $\im(f)=0$ or $\im(f)=S$. If we have $\im(f)=S$, then $S$
is a quotient of $X$ and belongs to $\cv''$, which contradicts the definition
of $\cw$. Thus, we have $\im(f)=0$ and $\cv''\subset \mbox{ }^\perp S\cap \cw$.
By the maximality of $C$, it follows that the class $\mbox{ }^\perp S\cap \cw$
belongs to $C$ and there is an arrow $\mbox{ }^\perp S\cap \cw \to \cs$
in $\Hasse(C)$ labeled by $S$, \cf Theorem~3.4 of 
\cite{DemonetIyamaReadingReitenThomas17}. So the module $S$ belongs
to $\cv$, which is a contradiction.
\end{proof}

\begin{theorem} \label{thm:chains} The map $\Phi: S_\bt \mapsto \ct_\bt$
induces a bijection from the set of equivalence classes of maximal chains
of bricks to the set of maximal chains of torsion classes.
\end{theorem}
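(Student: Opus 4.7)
The plan is to construct a two-sided inverse $\Psi$ to $\Phi$ restricted to maximal chains. Given a maximal chain $C$ of torsion classes, define $\Psi(C)$ as follows. Let $I$ denote the set of arrows of $\Hasse(C)$, totally ordered by $q < q'$ iff $\text{target}(q) \leq \text{source}(q')$ in $C$. By maximality of $C$, each arrow $q\colon \cu' \to \cu$ of $\Hasse(C)$ is also an arrow of $\Hasse(\tors A)$, so Theorem~3.4 of \cite{DemonetIyamaReadingReitenThomas17} yields a unique brick label $S_q \in \cu \cap (\cu')^\perp$. Setting $\Psi(C) := (S_q)_{q \in I}$, the Hom-orthogonality $\Hom(S_q, S_{q'}) = 0$ for $q < q'$ follows from $S_q \in \text{target}(q) \subseteq \text{source}(q')$ and $S_{q'} \in \text{source}(q')^\perp$. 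Thus $\Psi(C)$ is a chain of bricks.

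The next step is to verify $\Phi(\Psi(C)) = C$. By Proposition~\ref{prop2}, each $\ct \in C$ equals the join $\bigvee_q \T(S_q)$ over $q \in I$ with $\text{target}(q) \leq \ct$; that is, $\ct = \ct_{j_\ct}$ for the ideal $j_\ct := \{q \in I : \text{target}(q) \leq \ct\}$ of $I$. Hence $C \subseteq \Phi(\Psi(C))$; since $\Phi(\Psi(C))$ is itself a chain of torsion classes and $C$ is maximal, the equality follows.

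The main obstacle is to show that $\Psi(C)$ is itself a maximal chain of bricks. Suppose, toward a contradiction, that a brick $B$ not appearing in $\Psi(C)$ can be inserted at a position corresponding to a Dedekind cut $I = I_< \sqcup I_>$. The Hom-orthogonality of the enlarged chain forces $B \in \ct_<^\perp$, where $\ct_< := \bigvee_{q \in I_<} \T(S_q) \in C$, together with $\Hom(B, S_q) = 0$ for every $q \in I_>$. The family $\{S_q : q \in I_>\}$ consists of the brick labels of the Hasse arrows of $C$ with source $\geq \ct_<$, and the full collection of Hom constraints should be so restrictive that no such $B$ outside the image of $\Psi(C)$ can exist: when $\ct_<$ admits an immediate successor $\ct_<^+ \in C$ with Hasse label $S_{q^+}$ (where $q^+ = \min I_>$), the condition $\Hom(B, S_{q^+}) = 0$ together with uniqueness of the brick in $\ct_<^+ \cap \ct_<^\perp$ rules out $B \in \ct_<^+$; the further constraints $\Hom(B, S_q) = 0$ for the remaining $q \in I_>$ eliminate the other possibilities. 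Limit cases, where $\ct_<$ has no immediate successor in $C$, reduce to the previous by approximating $\ct_<$ from below via elements of $C$ possessing immediate successors.

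Finally, injectivity of $\Phi$ on maximal chains follows directly from Proposition~\ref{prop1}, while surjectivity is given by $\Phi \circ \Psi = \id$. The delicate point is the maximality of $\Psi(C)$: one must show that the combined Hom constraints imposed by a Dedekind cut of $I$ are incompatible with every brick outside the image of $\Psi(C)$, which requires a careful analysis of the brick-labeling of minimal torsion class inclusions in $\ct_<^\perp$ via Theorem~3.4 of \cite{DemonetIyamaReadingReitenThomas17}.
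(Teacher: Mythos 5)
Your construction of $\Psi$ and the verification that $\Phi(\Psi(C))=C$ coincide with the paper's argument: the paper indexes the chain of bricks by the completely join irreducible elements of $C$ (equivalently, by the Hasse arrows of $C$, as you do), takes the brick labels of those arrows, checks $\Hom$-orthogonality by the same two-line computation, and deduces $C\subseteq\Phi(\Psi(C))$ from Proposition~\ref{prop2}, hence equality by maximality of $C$. Injectivity via Proposition~\ref{prop1} is also the paper's route.

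The genuine gap is your third paragraph. The assertion that the $\Hom$-constraints attached to a Dedekind cut ``should be so restrictive that no such $B$ outside the image of $\Psi(C)$ can exist'' is a hope, not a proof, and the reduction of the limit cases by ``approximating $\ct_<$ from below'' is unjustified: when $\ct_<$ has no immediate successor in $C$, a brick $B\in\ct_<^{\perp}$ orthogonal to all $S_q$, $q\in I_>$, is not obviously excluded by any of the finite stages you propose to approximate with, and nothing is said about why the exclusion passes to the limit. As written, the maximality of $\Psi(C)$ is therefore not established. Fortunately, the entire paragraph is unnecessary: maximality of $\Psi(C)$ follows formally from what you have already proved. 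Indeed, if $S'_\bt$ is a chain of bricks with $\Psi(C)\leq S'_\bt$, then since $\Phi$ is an order-preserving injection (Proposition~\ref{prop1}) we get $C=\Phi(\Psi(C))\subseteq\Phi(S'_\bt)$; maximality of $C$ forces $\Phi(S'_\bt)=C=\Phi(\Psi(C))$, and injectivity forces $S'_\bt$ to be equivalent to $\Psi(C)$. This is in effect how the paper disposes of the point (it does not even state it separately). Replacing your third paragraph by this observation completes the argument, up to the check --- left implicit in the paper as well --- that $\Phi$ carries maximal chains of bricks to maximal chains of torsion classes.
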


\begin{proof} By Proposition~\ref{prop1}, it only remains to prove that $\Phi$ is
surjective. Let $C$ be a maximal chain of torsion classes. Let $I$ be the
poset $\cjirr(C)$. For each $i\in I$, there is a unique arrow $\cu_i \to i$
in $\Hasse(C)$. By the maximality, it is also an arrow of $\Hasse(A)$.
Let $S_i$ be its label, \cf Definition~3.5 of \cite{DemonetIyamaReadingReitenThomas17}.
We claim that the map $i \mapsto S_i$ is an $I$-chain of bricks.
Indeed, if we have $i<j$ in $I$, then $i\subseteq \cu_j \subseteq j$ and so
$S_j\in \cu_j^\perp$ and $S_i\in i\subseteq \cu_j$ so that we have
$\Hom(S_i,S_j)=0$. Let $\ct_\bt$ be the chain of torsion classes
$\Phi(S_\bt)$. For $\cu\in C$, consider the ideal $j$ of $I$ formed by the
$i\in I$ contained in $U$. By Proposition~\ref{prop2}, we have $\cu=\ct_j$.
Whence an inclusion $C \subseteq \ct_\bt$. By the maximality of $C$,
we actually have an equality.
\end{proof}



\def\cprime{$'$} \def\cprime{$'$}
\providecommand{\bysame}{\leavevmode\hbox to3em{\hrulefill}\thinspace}
\providecommand{\MR}{\relax\ifhmode\unskip\space\fi MR }
\providecommand{\MRhref}[2]{%
  \href{http://www.ams.org/mathscinet-getitem?mr=#1}{#2}
}
\providecommand{\href}[2]{#2}

\end{document}